\documentclass[]{siamart220329}
%\documentclass[letterpaper, 10 pt, journal, twoside]{IEEEtran}
%\documentclass[letterpaper, 10 pt, conference]{Ieeeconf}

%\IEEEoverridecommandlockouts                              % This command is only
% needed if you want to
% use the \thanks command
%\overrideIEEEmargins

%========= IMPORT PACKAGES ==========

%\usepackage[margin=1in]{geometry}

\usepackage{graphicx}
\graphicspath{{./fig/}}

\usepackage{subcaption}
\usepackage[shortlabels]{enumitem}

\usepackage{amsmath, amsfonts, amssymb}
\usepackage{tikz-cd}
\usepackage{mathtools}

\usepackage[mathscr]{euscript}
\usepackage{dsfont}

\usepackage{tikz}
\usepackage{tikz-cd}
\usetikzlibrary{calc}

% \usepackage[colorlinks = true,
%             linkcolor = blue,
%             urlcolor  = blue,
%             citecolor = blue,
%             anchorcolor = blue]{hyperref}
%\usepackage[capitalise]{cleveref}

%========= ENVIRONMENT DEFINITIONS ==========

% \newtheorem{theorem}{Theorem}

% \newtheorem{lemma}[theorem]{Lemma}
% \newtheorem{proposition}[theorem]{Proposition}
% \newtheorem{corollary}[theorem]{Corollary}
% \newtheorem{definition}[theorem]{Definition}
\newtheorem{problem}[theorem]{Problem}
\newtheorem{example}[theorem]{Example}
\newtheorem{remark}[theorem]{Remark}

%========= COMMAND DEFINITIONS ==========

\newcommand{\define}[1]{\emph{#1}}

\newcounter{refer}
\newcommand{\labellocal}[1]{\label{local:\therefer:#1}}
\newcommand{\reflocal}[1]{\ref{local:\therefer:#1}}
\newcommand{\Creflocal}[1]{\Cref{local:\therefer:#1}}

\newcommand{\Span}{\operatorname{Span}}
\newcommand{\Ima}{\operatorname{Im}}
\newcommand{\Ker}{\operatorname{Ker}}

\newcommand{\im}{\operatorname{Im}}

\newcommand\restr[2]{{% we make the whole thing an ordinary symbol
		\left.\kern-\nulldelimiterspace % automatically resize the bar with \right
		#1 % the function
		\vphantom{\big|} % pretend it's a little taller at normal size
		\right|_{#2} % this is the delimiter
	}}
	
\newcommand{\RR}{\mathbb{R}}

\newcommand{\NN}{\mathbb{N}}
\newcommand{\ZZ}{\mathbb{Z}}
\newcommand{\CC}{\mathbb{C}}

\newcommand{\CCge}[1]{\CC_{\geq#1}}
\newcommand{\CCl}[1]{\CC_{<#1}}

\newcommand{\RRg}[1]{\RR_{>#1}}
\newcommand{\RRge}[1]{\RR_{\geq#1}}

\newcommand{\RRle}[1]{\RR_{\leq#1}}

\newcommand{\ZZge}[1]{\ZZ_{\geq#1}}

\newcommand{\local}{\mathrm{loc}}

\newcommand{\hooktwoheadrightarrow}{%
  \hookrightarrow\mathrel{\mspace{-15mu}}\rightarrow
}

\newcommand{\spec}{\operatorname{Spec}}
\newcommand{\domain}{\operatorname{Dom}}

\newcommand{\diag}{\operatorname{Diag}}
\newcommand{\gradient}{\nabla}

\newcommand{\system}{\operatorname{Sys}}

\newcommand{\sysclose}{\operatorname{Close}}

\newcommand{\vertiii}[1]{{\left\vert\kern-0.25ex\left\vert\kern-0.25ex\left\vert #1 
	\right\vert\kern-0.25ex\right\vert\kern-0.25ex\right\vert}}

\newenvironment{talign*}
	{\csname align*\endcsname}
	{\endalign}

%========= PREAMBLE ==========

\title{Exponential stabilization of infinite-dimensional systems by finite-dimensional controllers\thanks{Submitted to the editors on September 04, 2023
\funding{This research was supported by the Natural Sciences and Engineering Research Council of Canada (NSERC).}}
}

\author{Tian Xia\thanks{The Edward S. Rogers Sr. Department of Electrical \& Computer Engineering, University of Toronto, Toronto, ON, M5S 3G4, Canada
    (\email{t.xia@mail.utoronto.ca}; \email{luca.scardovi@utoronto.ca}).}
\and Giacomo Casadei\thanks{Laboratoire Amp\`{e}re Dpt. EEA of the \'{E}cole Centrale de Lyon, Universit\'{e} de Lyon, 69134 \'{E}cully, France.}
\and Francesco Ferrante\thanks{University of Perugia, Department of Engineering, Perugia, Italy.}
\and Luca Scardovi\footnotemark[2]
%\thanks{The Edward S. Rogers Sr. Department of Electrical \& Computer Engineering, University of Toronto, Toronto, ON, M5S 3G4, Canada (\email{luca.scardovi@utoronto.ca}).}
}

%========= MAIN ==========

\begin{document}

\maketitle

\newif\ifshowpages
% Comment out next line for submission
\showpagestrue
\ifshowpages
	\pagestyle{plain}
	\thispagestyle{plain}
\else
	\pagestyle{empty}
	\thispagestyle{empty}
\fi

\begin{abstract}
	This paper studies the feedback stabilization of abstract Cauchy problems with unbounded output operators by finite-dimensional controllers. 
	Both necessary conditions and sufficient conditions for feedback stabilizability are presented.
	The proof of closed-loop stability is based on a novel input-output gain introduced in this paper.
	For systems satisfying a property we call quasi-finite, an equivalent characterization of feedback stabilizability is obtained.
	Quasi-finiteness is verified for classes of parabolic and hyperbolic equations. 
\end{abstract}

\begin{keywords}
 	Linear systems, Infinite-dimensional systems, Feedback stabilization
\end{keywords}

\begin{AMS}
	93C20, 93D15, 93D25
\end{AMS}

\section{Introduction}
\subsection{Background}
The control of infinite-dimensional systems has been an active area of research for many decades. Indeed, infinite-dimensional systems occur in many applications ranging from engineering, physics, and biology.
In the linear time-invariant setting, many results developed for finite-dimensional systems can naturally be extended  to the infinite-dimensional setting; see \cite{curtain95} for a complete monograph on this subject. In particular, whenever the input and output operators are bounded, stabilizable and detectable systems can always be feedback stabilized by an infinite-dimensional controller; see \cite[Chapter 5]{curtain95}. However, 
this approach has two main limitations. The first limitation is that infinite-dimensional controllers are hard to implement. The second limitation is that assuming boundedness of the input and output operators dramatically limits the range of applications that can be addressed. A possible workaround to this second limitation lies in the use of the so-called backstepping \cite{krstic08}, which has been a very successful control design methodology for infinite dimensional systems. However, backstepping too leads to infinite-dimensional controllers. 

The design of finite-dimensional controllers for infinite-dimensional systems has been considered in the 80's \cite{curtain82,balas88}, but under fairly restrictive assumptions on the input and output operators.
Specifically, \cite{balas88} analyzes systems with bounded input/output operators, and \cite{curtain82} imposes assumptions to ensure a modal separation. By relying on the ideas in  \cite{fattorini1971exact}, where the so-called ``Reduction to Moment Problems'' is introduced in the context of controllability analysis of systems modeled via linear parabolic partial differential equations (\emph{PDE}s), more recent results lift these restrictions in various directions see, e.g., \cite{prieur19,mironchenko2020local}. In particular, the approach pursued in this line of works can be roughly illustrated as follows. By relying on a spectral decomposition of the system dynamic operator, the system is rewritten as the interconnection of a finite-dimensional unstable system and an infinite-dimensional (exponentially) stable one. Then, a finite-dimensional controller is designed to stabilize the finite-dimensional unstable component of the plant while preserving closed-loop stability of the actual (infinite-dimensional) system. A review of this approach, as well its application to systems governed by hyperbolic PDEs is presented in 
\cite[Section 4]{russell1978controllability}. Similar ideas have been later used also in the context of semilinear systems in \cite{coron2004global,coron2006global}. 

Due to its effectiveness, the idea of relying on a spectral decomposition of the system operator has seen an increasing attention in the community in the last few years. In this context, researchers have proposed several extensions of the seminal work by Fattorini covering different problems ranging from output feedback stabilization to set-point regulation for linear, semilinear, and nonlinear infinite-dimensional systems. A non-exhaustive list of references follows. 
 
In \cite{katz20}, the spectral decomposition technique is used to derive constructive conditions for observer-based control design for $1$-D parabolic equations. A similar approach is used in \cite{lhachemi22}, where finite-dimensional observer-based control design of reaction-diffusion equations with Dirichlet/Neumann boundary measurement is considered. 
This approach has been later extended to account for delays in \cite{katz2020boundary,lhachemi2022boundary,lhachemi2023output}. 
Moreover, extensions to the problem of set-point tracking as well as to specific classes of semilinear/nonlinear systems can be found in \cite{lhachemi2021finite,lhachemi2022proportional,lhachemi2022local}. 
In \cite{dus21}, state feedback control design for linear and saturated transport equations with boundary and in-domain couplings is studied. Finite-dimensional output feedback input-output stabilization of a reaction diffusion system in the presence of in-domain disturbances is tackled in \cite{shreim2022input}.
 
\subsection{Contributions and outline of the paper}

A limiting aspect of the results mentioned so far is that they apply to specific classes of infinite-dimensional systems. Our paper overcomes this limitation by focusing on the problem of stabilization by finite-dimensional controllers in a more general framework. Within this setting, necessary and sufficient conditions for the solvability of this problem are provided. 

The sufficient conditions we provide are based on the constructions of \cite{lhachemi22}:
the system is decomposed into its stable and unstable modes, and an observer is built for the unstable mode and a sufficient large subspace of the stable mode.
As opposed to \cite{katz20,lhachemi22}, where closed-loop asymptotic stability is ensured by a Lyapunov argument, we make use a specifically tailored small-gain analysis, which, in our opinion, is better suited for the analysis of composite systems.

To handle systems with unbounded input/output operators, we find it convenient to step outside of the abstract Cauchy problem framework and consider input-state-output relations (which we describe in \Cref{sect:inputoutput_relation}). Necessary conditions for stabilization by finite-dimensional controller are presented in \Cref{sect:necessary}, while sufficient conditions are provided in \Cref{sect:sufficient}.
The applicability of the proposed results is illustrated on both parabolic and hyperbolic systems in \Cref{sect:applications}.

\subsection{Notations}

We denote by $\CCge{a}$ the subset of $\CC$ with real part greater or equal to $a$.
The notations $\CCl{a}$, $\RRg{a}$, $\RRle{a}$ are analogously defined.
The space of $k$-times continuously differentiable $Y$-valued functions on $X$ is denoted by $C^{k}_{\local}(X, Y)$.
The symbol $H^k(X)$ denotes the $L^2$-based Sobolev space of order $k$.
$H^k_0(X)$ denotes the closure of $C^{\infty}_{c}(X)$ in $H^k(X)$, where $C^{\infty}_{c}(X)$ denotes smooth compactly supported functions on $X$ (note that $C^{\infty}_{c}([0,1))$ must vanish near $1$, but not $0$).
Given real-valued functions $f$ and $g$ on $X$, the inequality $f(x) \lesssim g(x)$ uniform in $x \in S \subseteq X$ means that there exists $a > 0$ such that $f(x) \leq a g(x)$ for all $x \in S$.

\section{Problem statement}

Our objective is to study the exponential stabilization of infinite-dimensional systems by finite-dimensional controllers.
We start by giving an intuitive, but technically imprecise statement of the problem.

Consider the abstract Cauchy problem
\begin{equation} \begin{aligned} \label{eq:system}
	\dot{x} &= Ax + Bu, \\
	y &= Cx,
\end{aligned} \end{equation}
where $A$ generates a strongly continuous semigroup on the state space $X = \domain(e^A)$, $B : U \rightarrow X$ is bounded, and $C : \domain(A) \rightarrow Y$ is bounded (note that $C$ may be unbounded as an operator $X \rightarrow Y$).
Denote \Cref{eq:system} by $\system(A, B, C)$.
$\system(A, B, C)$ is finite-dimensional if $\dim X < \infty$.
In this case, $\domain(A)$ must be equal to $X$ and $C$ must be bounded on $X$.
$\system(A, B, C)$ is called exponentially stable if $\|e^{At}\| \lesssim e^{-\beta t}$ uniform in $t \geq 0$ for some $\beta > 0$.

\begin{remark} \label{rmk:system_assumption}
	Whenever $\system(E, F, G)$ is written, we assume that $E$ generates a strongly continuous semigroup on a Banach space $W$, $F$ is a bounded operator from a Banach space to $W$, and $G$ is a bounded operator from $\domain(E)$ to a Banach space.
\end{remark}

\begin{problem} \label{prob:control}
	Given a system $\system(A, B, C)$, design a finite-dimensional controller $\system(E, F, G)$ such that their closed-loop composition\footnote{The notation $\partial$ is used for both partial derivatives and the total derivative when the function is single-variable.}
	\begin{align} \label{eq:closedloop}
		\partial_t \begin{bmatrix} x \\ w \end{bmatrix} &= \begin{bmatrix} A & BG \\ FC & E \end{bmatrix} \begin{bmatrix} x \\ w \end{bmatrix}
	\end{align}
	is exponentially stable.
\end{problem}

The problem statement is not entirely precise because the composition of abstract Cauchy problems $\system(\cdot, \cdot, \cdot)$ may not be an abstract Cauchy problem (due to the unbounded output operators).
The definitions of the closed-loop system and its exponential stability are therefore ambiguous.
These ambiguities will be resolved by extending these notions to a larger class of systems containing abstract Cauchy problems and closed under compositions. 
We will consider the class of systems described by input-state-output relations, formally defined in the next section.
This extension will also be essential for our stability analysis in \Cref{sect:sufficient}.

\subsection{Input-state-output relations} \label{sect:inputoutput_relation}

Let $U$, $X$, $Y$ be Banach spaces, any subset of $C^{0}_{\local}(\RRge{0}, U) \times C^{0}_{\local}(\RRge{0}, X) \times C^{0}_{\local}(\RRge{0}, Y)$ is called a \define{$(U, X, Y)$-relation}. 
Relations are used to represent possible input-state-output behaviours of a system.
%A system with input space $U$, state space $X$, and output space $Y$ (all Banach spaces) is a subset of $C^{0}_{\local}(\RRge{0}, U) \times C^{0}_{\local}(\RRge{0}, X) \times C^{0}_{\local}(\RRge{0}, Y)$, representing possible behaviours of the system.
Any set of equations together with input/state/output specifications defines an input-state-output relation.
For example, $\dot{x} = u + \dot{u} + \ddot{u} ,\, w + \dot{u} = 0$ with input $u$, state $x$, and output $y = (w, \dot{w})$ defines a relation consisting of $(u, x, y)$ such that all terms in the equations are defined, continuous, and satisfy the equations above.
%\end{itemize}
In this case, the regularities of the functions are $u \in C^{2}_{\local}(\RRge{0}, U)$, $x \in C^{1}_{\local}(\RRge{0}, X)$ and $y \in C^{0}_{\local}(\RRge{0}, W \times W)$. Similarly, the system $\system(A, B, C)$ defined by \Cref{eq:system} consists of $(u, x, y)$ solving \Cref{eq:system} with $u \in C^{0}_{\local}(\RRge{0}, U)$, $x \in C^{0}_{\local}(\RRge{0}, \domain(A)) \cap C^{1}_{\local}(\RRge{0}, X)$, $y \in C^{0}_{\local}(\RRge{0}, Y)$.
In other words, $\system(A, B, C)$ is the relation consisting of classical solutions of \Cref{eq:system}. From here on we will use the terms relation and system interchangeably.

A system $S$ is called \define{finite-dimensional} if $\dim X < \infty$. Compositions of systems are defined naturally.
Let each $S_j$ be a $(U_j, X_j, Y_j)$-relation.
If $Y_1 = U_2$, we denote by $S_2 \circ S_1$ the serial composition.
If $U_1 = U_2$ and $Y_1 = Y_2$, we denote by $S_1 + S_2$ the parallel composition, whose output is the sum of outputs of $S_1$ and $S_2$.
Note that both $S_2 \circ S_1$ and $S_1 + S_2$ have state space $X_1 \times X_2$, equipped with the product norm.
If $Y_1 = U_1$, we denote by $\sysclose(S_1)$ the closed-loop composition of $S_1$ under the feedback $u_1 = y_1$, as a relation with no input, state $x_1$, and no output.
Two systems $S_1$, $S_2$ are isomorphic (denoted $S_1 \cong S_2$) if $U_1 = U_2$, $Y_1 = Y_2$ (as Banach spaces), and there exists a topological isomorphism $X_1 \hooktwoheadrightarrow X_2$ such that the induced bijection from $C^{0}_{\local}(\RRge{0}, U_1) \times C^{0}_{\local}(\RRge{0}, X_1) \times C^{0}_{\local}(\RRge{0}, Y_1)$ to $C^{0}_{\local}(\RRge{0}, U_2) \times C^{0}_{\local}(\RRge{0}, X_2) \times C^{0}_{\local}(\RRge{0}, Y_2)$ sends $S_1$ surjectively into $S_2$.
Thus, $S_1 + S_2$ and $S_2 + S_1$ are isomorphic even though they have distinct state spaces $X_1 \times X_2$ and $X_2 \times X_1$.
It is worth noting that serial and parallel compositions preserve isomorphism classes of systems.  

\begin{definition}
	System $S$ is called \define{$\beta$-exponentially stable} if $\|x(t)\| \lesssim e^{-\beta t}\|x(0)\|$ uniform in $(0, x, y) \in S$ and $t \geq 0$;
	$S$ is \define{exponentially stable} if it is $\beta$-exponentially stable for some $\beta > 0$; 
\end{definition}

The representation of a system (relation) as $\system(A, B, C)$ is unique if it exists.
So there is no ambiguity in defining properties of systems using the operators $A, B, C$.
\begin{proposition}
	If a $(U, X, Y)$-relation $S$ 
	%$\subseteq C^{0}_{\local}(\RRge{0}, U) \times C^{0}_{\local}(\RRge{0}, X) \times C^{0}_{\local}(\RRge{0}, Y)$ 
	is equal to $\system(A, B, C)$, then $A$, $B$, $C$ are uniquely determined.
\end{proposition}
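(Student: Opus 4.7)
Assume $S = \system(A_1, B_1, C_1) = \system(A_2, B_2, C_2)$; we need to show $A_1 = A_2$, $C_1 = C_2$, and $B_1 = B_2$. The plan is to probe $S$ first with autonomous trajectories (input $u \equiv 0$) to pin down $\domain(A_i)$, $A_i$, and $C_i$, then with constant-input trajectories starting at the origin to pin down $B_i$. Throughout, I rely on the fact that classical solutions to the abstract Cauchy problems $\system(A_i,B_i,C_i)$ are uniquely determined by their input and initial state, so each representation prescribes at most one $(u,x,y)$ per $(u,x(0))$.

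\textbf{Recovering $A$ and $C$.} For any $x_0 \in \domain(A_1)$, the triple $(0,\, e^{A_1 t} x_0,\, C_1 e^{A_1 t} x_0)$ is a classical solution of $\system(A_1,B_1,C_1)$ and therefore lies in $S = \system(A_2,B_2,C_2)$. Its state component must take values in $\domain(A_2)$ for all $t \geq 0$, yielding $\domain(A_1) \subseteq \domain(A_2)$; by symmetry $\domain(A_1) = \domain(A_2) =: D$. The same trajectory satisfies both $\dot{x} = A_1 x$ and $\dot{x} = A_2 x$, so evaluating at $t = 0$ gives $A_1 x_0 = A_2 x_0$ for every $x_0 \in D$, hence $A_1 = A_2$. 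The output of that trajectory satisfies $C_1 x(t) = y(t) = C_2 x(t)$ for all $t \geq 0$; specializing to $t = 0$ yields $C_1 = C_2$ on $D$.

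\textbf{Recovering $B$.} For each $u_0 \in U$ take the constant input $u(t) \equiv u_0$ and the initial state $x_0 = 0 \in D$. Since $u \in C^1_{\local}$ and $x_0 \in D$, standard semigroup theory ensures the corresponding mild solution is a classical one, so there is a genuine trajectory $(u,x,y) \in \system(A_1,B_1,C_1) = S = \system(A_2,B_2,C_2)$. Reading off $\dot{x}(0)$ via each representation gives $\dot{x}(0) = A_i \cdot 0 + B_i u_0 = B_i u_0$ for $i = 1, 2$, so $B_1 u_0 = B_2 u_0$; as $u_0$ was arbitrary, $B_1 = B_2$.

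\textbf{Main difficulty.} The argument is largely bookkeeping once the right trajectories are chosen; the only substantive ingredient is ensuring that the probing trajectories are genuine classical solutions of the abstract Cauchy problem (not merely mild ones), which rests on the regularity $u \in C^1_{\local}$ combined with $x_0 \in D$ and is handled by the standard variation-of-constants estimate for bounded input operators $B$.
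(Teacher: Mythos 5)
Your proof is correct and follows essentially the same route as the paper: zero-input trajectories pin down $A$ (and $C$), and a forced trajectory with $x(0)=0$ recovers $B$. The only difference is cosmetic — the paper identifies $e^{At}$ on the dense set $\domain(A)$ and then gets $A$ as the generator, whereas you read off $\domain(A)$, $A$, and $C$ directly from $\dot{x}(0)$ and $y(0)$ and spell out the $B$, $C$ steps the paper dismisses as trivial.
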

\begin{proof}
	Elements of $S$ with zero input determine $e^{At}$ on $\domain(A)$, hence $e^{At}$ is unique by density.
	Thus, $A$ is uniquely determined. Uniqueness of $B$ and $C$ follows trivially.
\end{proof}

The class of systems $\system(\cdot, \cdot, \cdot)$ is in general not closed under compositions, due to the unboundedness of the output operator.
The following proposition shows that the closed-loop composition in \Cref{prob:control} remains a system of the form $\system(\cdot, \cdot, \cdot)$.

\begin{proposition} \label{pro:closedloop_semigroup} \stepcounter{refer}
	Given a $(U, X, Y)$-relation $S_1 = \system(A, B, C)$ and a $(Y, W, Z)$-relation $S_2 = \system(E, F, G)$.
	If $E$ is bounded, then there is an isomorphism of input-state-output relations
	\begin{align} \labellocal{eq:1}
		S_2 \circ S_1 &\cong \system\left( \begin{bmatrix} A & 0 \\ \lambda FC_\lambda - EFC_\lambda & E \end{bmatrix},\, \begin{bmatrix} B \\ F C_{\lambda} B \end{bmatrix},\, \begin{bmatrix} -GFC_\lambda & G \end{bmatrix} \right),
	\end{align}
	where $\lambda$ is any fixed number in $\CC \setminus \spec(A)$ and $C_\lambda$ is the bounded operator $C(\lambda - A)^{-1} : X \rightarrow Y$.
	Furthermore, if $U = Z$, then the closed-loop composition $\sysclose(S_2 \circ S_1)$ is isomorphic to
	\begin{align} \labellocal{eq:2}
		\system\left( \begin{bmatrix} A & 0 \\ 0 & E \end{bmatrix} + \begin{bmatrix} -BGFC_{\lambda} & BG \\ \lambda F C_\lambda - EFC_\lambda - FC_{\lambda}BGFC_{\lambda} & FC_{\lambda}BG \end{bmatrix}, 0, 0 \right).
	\end{align}
\end{proposition}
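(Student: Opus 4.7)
The plan is to exhibit an explicit change of coordinates on $X \times W$ that absorbs the unbounded composition $FC$ into bounded operators, after which the resulting dynamics fit the $\system(\cdot,\cdot,\cdot)$ template.

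First, I would introduce the state transformation $\Phi : (x, w) \mapsto (x,\, w + FC_\lambda x)$, and write $\tilde{w} := w + FC_\lambda x$. Since $C_\lambda = C(\lambda - A)^{-1}$ is bounded $X \to Y$ and $F$ is bounded $Y \to W$, the operator $FC_\lambda$ is bounded, so $\Phi$ is a topological isomorphism of $X \times W$ with inverse $(x, \tilde{w}) \mapsto (x,\, \tilde{w} - FC_\lambda x)$. This will provide the claimed isomorphism of relations, provided the triples match under the induced map on function spaces.

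Second, I would compute $\dot{\tilde{w}} = \dot{w} + FC_\lambda \dot{x}$ for a classical trajectory of $S_2 \circ S_1$. From $(\lambda - A)(\lambda - A)^{-1} = I$ one gets $C_\lambda A x = \lambda C_\lambda x - Cx$ for $x \in \domain(A)$, so the unbounded term $FCx$ coming from $\dot{w} = Ew + FCx$ cancels against $FC_\lambda A x$, leaving
\[
\dot{\tilde{w}} = E\tilde{w} + (\lambda FC_\lambda - EFC_\lambda) x + FC_\lambda B\, u.
\]
Together with $\dot{x} = Ax + Bu$ and $z = Gw = -GFC_\lambda x + G\tilde{w}$, this is precisely the system in \reflocal{eq:1}. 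To conclude it is a genuine $\system(\cdot,\cdot,\cdot)$, I would note that its generator equals $\diag(A, E)$ plus the bounded perturbation $\begin{bmatrix} 0 & 0 \\ \lambda FC_\lambda - EFC_\lambda & 0 \end{bmatrix}$, which generates a strongly continuous semigroup on $X \times W$ by the bounded perturbation theorem (using that $A$ is a generator and $E$ is bounded), with domain $\domain(A) \times W$.

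The closed-loop claim \reflocal{eq:2} then follows by inserting the feedback $u = z = -GFC_\lambda x + G\tilde{w}$ into the representation \reflocal{eq:1} and expanding each block, which is a routine algebraic substitution. The main obstacle I anticipate is not the semigroup generation or the algebra, but verifying carefully that $\Phi$ induces a \emph{relation-level} isomorphism: that the regularity classes match on both sides (elements of $S_1$ have $x \in C^0_{\local}(\RRge 0, \domain(A)) \cap C^1_{\local}(\RRge 0, X)$, which is exactly what is needed to differentiate $FC_\lambda x$ in time and to apply $C_\lambda A x = \lambda C_\lambda x - Cx$), and that every classical solution of the transformed abstract Cauchy system pulls back via $\Phi^{-1}$ to an element of the original serial composition. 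Once this bookkeeping is done, both parts of the proposition are immediate.
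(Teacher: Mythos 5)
Your proposal is correct and follows essentially the same route as the paper: the paper's proof also introduces the coordinate change $x_3 = x_2 + FC_\lambda x_1$ (your $\tilde{w} = w + FC_\lambda x$) to cancel the unbounded term $FCx$ via the resolvent identity, and then obtains the closed-loop form by substituting the (now bounded) feedback. Your extra remarks on bounded perturbation of the generator and on matching regularity classes are fine and only make explicit what the paper leaves implicit.
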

\begin{proof}
	The system $S_1$ is defined by the equations 
	\begin{align*}
\dot{x}_1 &= Ax_1 + Bu_1\\
 y_1 &= Cx_1
	\end{align*}	
	By combining the two equations above, $y_1$ can be rewritten as
	\begin{align*}
		y_1 &= C (\lambda - A)^{-1} (\lambda x_1 - Ax_1) = \lambda C_\lambda x_1 - C_\lambda \dot{x}_1 + C_\lambda B u_1.
	\end{align*}
	Therefore, the serial composition $S_2 \circ S_1$ is the system defined by
	\begin{align*}
		\dot{x}_1 &= A x_1 + B u_1, \\
		\dot{x}_2 &= E x_2 + \lambda F C_\lambda x_1 - F C_\lambda \dot{x}_1 + F C_\lambda B u_1, \\
		y_2 &= G x_2, 
	\end{align*}
	with input $u_1$, state $(x_1, x_2)$, and output $y_2$.
	Letting $x_3 = x_2 + FC_{\lambda}x_1$, then
	\begin{align*}
		\dot{x}_1 &= A x_1 + B u_1, \\
		\dot{x}_3 &= E x_3 - EFC_{\lambda}x_1 + \lambda F C_\lambda x_1 + F C_\lambda B u_1, \\
		y_2 &= G x_3 - GFC_{\lambda} x_1. 
	\end{align*}
	This implies \Creflocal{eq:1}.
	\Creflocal{eq:2} follows from \Creflocal{eq:1} because the input/output operators of $S_2 \circ S_1$ are bounded.
\end{proof}

\section{Necessary conditions and preliminary analysis} \label{sect:necessary}

In this section, we identify and investigate some necessary conditions for the solvability of \Cref{prob:control}.
It is easy to see that stabilizability and detectability are required, as in the finite-dimensional case.
These notions are defined below to avoid ambiguity.

\begin{definition}
	A $(U, X, Y)$-relation $S = \system(A, B, C)$ is \define{stabilizable} if the set\footnote{This is the set of initial conditions admitting an input $u$ driving the state $x$ to zero asymptotically.}
	\begin{align*}
		\{x(0) \mid (u, x, y) \in S \text{ and } \lim_{t \rightarrow \infty} \|x(t)\| = 0\}
	\end{align*}
	is dense in $X$.
	It is \define{detectable} if every $(0, x, 0) \in S$ satisfies $\lim_{t \rightarrow \infty} \|x(t)\| = 0$.
\end{definition}

\begin{remark}
	When $\system(A, B, C)$ is finite-dimensional, stabilizability is classically defined as the existence of $u$ for every initial condition $x(0) \in X$ such that $x$ converges to zero.
	However, a general infinite-dimensional system does not possess any $(u, x, y)$ initialized at $x(0) \not\in \domain(A)$.
	Our definition of stabilizability is therefore a natural generalization of the classical definition to the infinite-dimensional setting.
\end{remark}

A less obvious necessary condition, which will come into play later, is that $S$ must have finite unstable part.

\begin{definition} \label{def:finite_unstable} \stepcounter{refer}
	A $(U, X, Y)$-relation $S = \system(A, B, C)$ has \define{finite unstable part} if there is a decomposition $S \cong S_u + S_s$ for a finite-dimensional $S_u = \system(A_u, B_u, C_u)$ and an exponentially stable $S_s = \system(A_s, B_s, C_s)$.
\end{definition}

\subsection{Necessary conditions}

This section proves the following result.

\begin{theorem} \label{pro:necessary}
	Let $S = \system(A, B, C)$.
	If there exists finite-dimensional $T = \system(E, F, G)$ such that $\sysclose(T \circ S)$ is exponentially stable, then $S$ is stabilizable, detectable, and has finite unstable part.
\end{theorem}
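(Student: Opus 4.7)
The plan is to handle stabilizability and detectability quickly via the closed-loop semigroup, then to spend the bulk of the effort on finite unstable part. By \Cref{pro:closedloop_semigroup}, realize the closed loop as $\sysclose(T \circ S) \cong \system(\mathcal{A}, 0, 0)$ on $X \times W$, where $\mathcal{A} = \diag(A, E) + K$ and $K$ is bounded and of finite rank (every block of $K$ factors through the finite-dimensional $W$). Exponential stability of the closed loop gives $\|e^{\mathcal{A} t}\| \lesssim e^{-\beta t}$ for some $\beta > 0$, so $(z - \mathcal{A})^{-1}$ is analytic and uniformly bounded on $\CCge{-\beta/2}$. For \textbf{stabilizability}, starting the closed-loop Cauchy problem from plant state $x_{0} \in \domain(A)$ and controller state $0$, and reading off $(u, x, Cx)$ with $u(t) := G w(t)$, yields a classical trajectory of $S$ whose state decays exponentially; density of $\domain(A)$ in $X$ concludes. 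For \textbf{detectability}, given $(0, x, 0) \in S$, the pair $(x(t), 0)$ satisfies the closed-loop equations because $Cx \equiv 0$ keeps the controller at rest and a zero controller state feeds nothing back into the plant; uniqueness of classical solutions identifies this with the closed-loop orbit from $(x(0), 0)$, which decays, forcing $x \to 0$.

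The core of the argument is \textbf{finite unstable part}. From the factorization $z - \diag(A, E) = (z - \mathcal{A})(I + (z - \mathcal{A})^{-1}K)$, I introduce the Fredholm determinant
\[
    d(z) := \det\bigl(I + (z - \mathcal{A})^{-1} K\bigr),
\]
a scalar-valued holomorphic function on $\CCge{-\beta/2}$ whose zero set is precisely $\spec(\diag(A, E)) \cap \CCge{-\beta/2} = (\spec(A) \cup \spec(E)) \cap \CCge{-\beta/2}$. Two estimates will localize these zeros in a compact set. First, $d(z) \to 1$ as $\re z \to \infty$, by the semigroup resolvent decay $\|(z - \mathcal{A})^{-1}\| = O(1/\re z)$. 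Second, $d(z) \to 1$ as $|\im z| \to \infty$ uniformly for $\re z$ in any compact subset of $(-\beta, \infty)$: each matrix entry of the finite-rank operator $(z - \mathcal{A})^{-1} K$ is the Laplace transform of an exponentially decaying orbit $t \mapsto e^{\mathcal{A} t} \phi$ and thus obeys Riemann--Lebesgue uniformly on compact families in $L^{1}$. Since $d$ is a non-constant holomorphic function whose zero set is confined to a compact region, the zero set is finite; each zero is an isolated eigenvalue of $A$ (or $E$) of finite algebraic multiplicity, because $z - \diag(A, E)$ is a finite-rank perturbation of the invertible $z - \mathcal{A}$ and hence Fredholm of index $0$ on this region.

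The Riesz spectral projection of $A$ for $\spec(A) \cap \CCge{-\beta/2}$ then produces a bounded $A$-invariant split $X = X_u \oplus X_s$ with $\dim X_u < \infty$, and restricting $B$ and $C$ gives the additive decomposition $S \cong S_u + S_s$. The \textbf{main obstacle} is upgrading the spectral bound $\spec(A_s) \subset \CCl{-\beta/2}$, where $A_s := A|_{X_s}$, to the pointwise exponential bound $\|e^{A_s t}\| \lesssim e^{-\beta' t}$ required by \Cref{def:finite_unstable}. My plan is to combine the identity $(z - \diag(A, E))^{-1} = (I + (z - \mathcal{A})^{-1} K)^{-1}(z - \mathcal{A})^{-1}$ with the uniform bound on $(z - \mathcal{A})^{-1}$ over $\CCge{-\beta/2}$ and the now-finite pole set of the left-hand side, obtaining a uniform resolvent estimate for $A_s$ on $\CCge{-\beta/2}$ (the finitely many isolated poles are cancelled once one composes with the complementary spectral projection $I - P$); a Gearhart--Pr\"uss-type argument---or, in the general Banach setting, a direct derivation via the closed-loop semigroup orbits projected onto $X_s$---then yields the required exponential decay.
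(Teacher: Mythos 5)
Your treatment of stabilizability and detectability is fine and matches the paper, which treats these as immediate; and your overall skeleton for the finite unstable part is also the paper's: use \Cref{pro:closedloop_semigroup} to write the closed-loop generator as $\mathcal{A}=\diag(A,E)+K$ with $K$ bounded of finite rank, deduce that $\diag(A,E)$, hence $A$, has only finitely many unstable spectral points with finite-rank Riesz projections, and transfer the operator-level splitting to the system-level decomposition $S\cong S_u+S_s$ (that transfer is exactly \Cref{pro:finite_unstable}). The difference is that the paper simply invokes the perturbation result quoted as \Cref{lem:finite_unstable}, whereas you attempt to reprove it. The spectral-localization half of your reproof is essentially sound: $d(z)=\det\bigl(I+(z-\mathcal{A})^{-1}K\bigr)$ is well defined for finite-rank $K$, vector-valued Riemann--Lebesgue gives $\|(z-\mathcal{A})^{-1}K\|\to 0$ along vertical lines in $\re z>-\beta$, so $d$ has finitely many zeros in $\CCge{-\beta/2}$, each an eigenvalue of finite algebraic multiplicity, yielding $X=X_u\oplus X_s$ with $\dim X_u<\infty$.

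The genuine gap is precisely the step you label the main obstacle: establishing $\|e^{A_st}\|\lesssim e^{-\beta' t}$, which is what \Cref{def:finite_unstable_2} (and hence \Cref{def:finite_unstable}) actually requires, and your proposal does not deliver it. A uniform resolvent bound on a right half-plane implies exponential decay only via Gearhart--Pr\"uss, a Hilbert-space theorem; the paper's framework (\Cref{rmk:system_assumption}) is Banach, and in Banach spaces the abscissa of uniform boundedness of the resolvent can be strictly smaller than the growth bound, so that implication fails. Your fallback, ``a direct derivation via the closed-loop semigroup orbits projected onto $X_s$'', is not an argument: $e^{\mathcal{A}t}$ does not commute with the spectral projections of $\diag(A,E)$ (the coupling $K$ is exactly what destroys invariance of $X_s$), so projected closed-loop orbits are not orbits of $e^{A_st}$ and give no bound on them. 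The standard way to close this gap --- and in essence what the cited \Cref{lem:finite_unstable} packages --- is an essential-growth-bound argument rather than a resolvent argument: for finite-rank $K$ the difference $e^{(\mathcal{A}-K)t}-e^{\mathcal{A}t}=-\int_0^t e^{(\mathcal{A}-K)(t-s)}Ke^{\mathcal{A}s}\,ds$ is compact (a norm limit of finite-rank operators), so $\diag(A,E)$ has the same essential growth bound as $\mathcal{A}$, at most $-\beta$; since every spectral point of $A_s$ above that level would be an isolated eigenvalue of finite multiplicity and all such points were removed into $X_u$, the growth bound of $e^{A_st}$ is negative. Without this (or an explicit Hilbert-space hypothesis together with a completed uniform resolvent estimate), your construction produces only a spectral decomposition, not the exponential stability of $S_s$, so the theorem is not yet proved in the paper's generality.
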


$S$ having finite unstable part is the only conclusion needing justification.
By \Cref{pro:closedloop_semigroup}, exponential stability of $\sysclose(T \circ S)$ implies that a finite-rank perturbation of the operator $\diag(A, 0)$ is exponentially stable.
Following the result from \cite{curtain95} recalled below, this implies that the operators $\diag(A, 0)$ and $A$ both have finite unstable parts.

\begin{definition} \label{def:finite_unstable_2} \stepcounter{refer}
	A semigroup generator $A$ has \define{finite unstable part} if $\domain(e^A)$ decomposes into a direct sum $X_u \oplus X_s$ of closed $A$-invariant\footnote{A subspace of $V \subseteq X$ is called $A$-invariant if every transition operator $e^{At}$ maps $V$ into $V$.} subspaces such that $\dim X_u < \infty$ and the restriction of $e^{At}$ to $X_s$ is an exponentially decaying\footnote{There exists $\epsilon > 0$ such that $\|e^{At}\| \lesssim e^{-\epsilon t}$ uniform in $t \geq 0$.} semigroup.
\end{definition}

\begin{lemma}[{\cite[Lemma 5.2.4, Thm. 5.2.6]{curtain95}}] \label{lem:finite_unstable} \stepcounter{refer}
	Assume that $A$ is the sum of the generator of an exponentially decaying semigroup  and a finite rank operator.
	Then $A$ has finite unstable part.
	%i.e., $\domain(e^A)$ decomposes into a direct sum $X_u \oplus X_s$ of closed $A$-invariant\footnote{A subspace of $V \subseteq X$ is called $A$-invariant if every transition operator $e^{At}$ maps $V$ into $V$.} subspaces such that $\dim X_u < \infty$ and the restriction of $e^{At}$ to $X_s$ is an exponentially decaying semigroup.
\end{lemma}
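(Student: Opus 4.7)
The plan is to analyze the spectrum of $A$, apply a Riesz spectral projection to carve out a finite-dimensional unstable subspace $X_u$, and then verify that the semigroup decays exponentially on the closed complement $X_s$. Write $A = A_0 + K$, where $A_0$ generates a semigroup satisfying $\|e^{A_0 t}\| \lesssim e^{-\omega_0 t}$ for some $\omega_0 > 0$, and $K$ has finite rank, so that $Kx = \sum_{i=1}^{n} \phi_i^*(x)\,\psi_i$ for some $\psi_i \in X$ and $\phi_i^* \in X^*$. Since $K$ is bounded, $A$ still generates a strongly continuous semigroup on $X$ by the bounded perturbation theorem.

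First, I would locate $\sigma(A)\cap\{\operatorname{Re}\lambda \geq -\omega_0/2\}$. For $\operatorname{Re}\lambda > -\omega_0$ the resolvent $(\lambda - A_0)^{-1}$ is bounded, and on $\domain(A)$ the factorization $\lambda - A = (\lambda - A_0)\bigl(I - (\lambda - A_0)^{-1} K\bigr)$ holds. Because $K$ has rank at most $n$, invertibility of $I - (\lambda - A_0)^{-1} K$ reduces to invertibility of the $n \times n$ matrix $I_n - M(\lambda)$ with entries $M(\lambda)_{ij} := \phi_i^*\bigl((\lambda - A_0)^{-1}\psi_j\bigr)$. The scalar function $\lambda \mapsto \det(I_n - M(\lambda))$ is holomorphic in $\{\operatorname{Re}\lambda > -\omega_0\}$, and from the Laplace representation $(\lambda - A_0)^{-1}\psi = \int_0^\infty e^{-\lambda t} e^{A_0 t}\psi\, dt$ combined with the Riemann--Lebesgue lemma, $M(\lambda) \to 0$ as $|\operatorname{Im}\lambda|\to\infty$ uniformly on any strip $\{-\omega_0/2 \leq \operatorname{Re}\lambda \leq C\}$. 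Consequently the zero set of $\det(I_n - M(\cdot))$ inside $\{\operatorname{Re}\lambda \geq -\omega_0/2\}$ is bounded, hence finite, and every such zero is an eigenvalue of $A$ of finite algebraic multiplicity.

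Next, enclose these finitely many eigenvalues by a rectifiable contour $\Gamma$ contained in $\{\operatorname{Re}\lambda > -\omega_0/2\}$ and define the Riesz projection $P_u := \frac{1}{2\pi i}\oint_\Gamma (\lambda - A)^{-1}\,d\lambda$. This is a bounded projection commuting with $e^{At}$; its range $X_u := P_u X$ is finite-dimensional, and $X_s := (I - P_u) X$ is a closed $A$-invariant complement with $\sigma(A|_{X_s}) \subset \{\operatorname{Re}\lambda \leq -\omega_0/2\}$.

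The last and hardest step is to deduce exponential decay of $e^{At}|_{X_s}$. On a general Banach space spectral location does not imply exponential decay, because the spectral bound may lie strictly below the growth bound. The key additional ingredient is that $K$, being of finite rank and hence compact, does not affect the essential growth bound of the semigroup: $\omega_{\mathrm{ess}}(A) = \omega_{\mathrm{ess}}(A_0) \leq \omega(A_0) \leq -\omega_0$. Restricting to the invariant subspace $X_s$ preserves this inequality, and combined with $s(A|_{X_s}) \leq -\omega_0/2$, the general identity $\omega(A|_{X_s}) = \max\bigl(\omega_{\mathrm{ess}}(A|_{X_s}),\, s(A|_{X_s})\bigr)$, which is valid once the spectrum above the essential growth bound has been removed, yields $\omega(A|_{X_s}) \leq -\omega_0/2$ and therefore $\|e^{At}|_{X_s}\| \lesssim e^{-\omega_0 t/2}$. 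This appeal to the stability of the essential growth bound under compact perturbations is where the principal technical weight of the proof sits; the spectral and projection arguments leading up to it are comparatively routine.
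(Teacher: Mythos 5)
The paper does not actually prove \Cref{lem:finite_unstable}: it imports the statement from \cite[Lemma 5.2.4, Thm.~5.2.6]{curtain95}, so there is no internal argument to compare against. Judged on its own, your proof is correct and is essentially the standard argument behind that citation: reduce invertibility of $\lambda-A$ on $\{\operatorname{Re}\lambda>-\omega_0\}$ to the scalar condition $\det(I_n-M(\lambda))\neq 0$, use the Laplace representation together with a (uniform-on-strips) Riemann--Lebesgue argument to confine the zeros in $\{\operatorname{Re}\lambda\geq-\omega_0/2\}$ to a bounded, hence finite, set, split off $X_u$ with a Riesz projection, and obtain decay on $X_s$ from the invariance of the essential growth bound under compact perturbations combined with the identity $\omega_0=\max(\omega_{\mathrm{ess}},s)$. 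You correctly identify the essential-growth-bound step as the load-bearing one; note that this route has the advantage of working on Banach spaces, which matches the paper's setting (\Cref{rmk:system_assumption}), whereas \cite{curtain95} is set in Hilbert spaces where one could alternatively finish with frequency-domain (Gearhart--Pr\"uss type) resolvent estimates.

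Two steps are asserted rather than argued, though both are routine. First, finite algebraic multiplicity and $\dim P_uX<\infty$: these follow because $(\lambda-A)^{-1}-(\lambda-A_0)^{-1}=(\lambda-A_0)^{-1}K\bigl(I-(\lambda-A_0)^{-1}K\bigr)^{-1}(\lambda-A_0)^{-1}$ has rank at most $n$, the contour integral of the holomorphic term $(\lambda-A_0)^{-1}$ vanishes, and the remaining integral takes values in (a multiple of) the closed convex hull of a compact set, so $P_u$ is a compact idempotent and hence finite rank. Second, ``restricting to $X_s$ preserves the inequality'' deserves one line: $e^{At}P_u$ is finite rank, so the essential norm of $e^{At}|_{X_s}$ is dominated by that of $e^{At}$. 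For the quoted invariance of $\omega_{\mathrm{ess}}$ you could, in the finite-rank case, give a self-contained proof: $e^{At}-e^{A_0t}=\int_0^t e^{A_0(t-s)}Ke^{As}\,ds$ takes values in $t$ times the closed convex hull of the compact set $\{e^{A_0\sigma}Kz:\sigma\in[0,t],\ \|z\|\leq M\}$, hence is compact, so the essential norms of the two semigroups coincide. Finally, a cosmetic point: if an eigenvalue lies exactly on the line $\operatorname{Re}\lambda=-\omega_0/2$, a contour inside $\{\operatorname{Re}\lambda>-\omega_0/2\}$ cannot enclose it; either let $\Gamma$ dip into $\{\operatorname{Re}\lambda>-\omega_0\}$ or choose the cut-off line to avoid the finitely many unstable real parts.
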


The notions of finite unstable part defined for system $S = \system(A, B, C)$ and operator $A$ are in fact equivalent, as proved in the following result.
Therefore, $S$ must have finite unstable part.

\begin{lemma} \label{pro:finite_unstable} \stepcounter{refer}
	Let $S$ be the $(U, X, Y)$-relation $\system(A, B, C)$.
	The decompositions $S \cong S_u + S_s$ as in \Cref{def:finite_unstable} correspond one-to-one to the decompositions of $X = X_u \oplus X_s$ as in \Cref{def:finite_unstable_2}.
	%into a direct sum $X_u \oplus X_s$ of closed $A$-invariant subspaces such that $\dim X_u < \infty$, and the restriction of $e^{At}$ to $X_s$ is an exponentially decaying semigroup.
\end{lemma}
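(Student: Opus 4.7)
I would construct mutually inverse maps between the two classes of decompositions. For the forward direction, starting from an operator decomposition $X = X_u \oplus X_s$ in the sense of \Cref{def:finite_unstable_2}, let $\pi_u, \pi_s$ denote the associated continuous projections (continuous because $X_u$ is finite-dimensional with closed complement $X_s$). Since $X_u$ is finite-dimensional and $e^{At}$-invariant, I would first observe that $X_u \subseteq \domain(A)$ and that $A_u := A|_{X_u}$ is bounded, while $A_s$ with domain $\domain(A) \cap X_s$ generates the restricted semigroup on $X_s$. I would then set $B_u := \pi_u B$, $B_s := \pi_s B$, $C_u := C|_{X_u}$, and $C_s := C|_{\domain(A_s)}$, and check that $(\pi_u, \pi_s) : X \to X_u \times X_s$ induces an isomorphism of relations $S \cong \system(A_u, B_u, C_u) + \system(A_s, B_s, C_s)$ by decoupling \Cref{eq:system} coordinate-wise.

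For the reverse direction, given $S \cong S_u + S_s$ with underlying topological isomorphism $\Phi : X_u \times X_s \to X$, I would set $\tilde X_u := \Phi(X_u \times \{0\})$ and $\tilde X_s := \Phi(\{0\} \times X_s)$. Both are closed ($\tilde X_u$ being finite-dimensional, $\tilde X_s$ being the image of a closed subspace under a topological isomorphism), and $X = \tilde X_u \oplus \tilde X_s$. Applying the relation isomorphism to zero-input, zero-output trajectories, which for $S_u + S_s$ evolve according to $e^{A_u t} \oplus e^{A_s t}$, shows that $\Phi$ intertwines this diagonal semigroup with $e^{At}$; this yields $A$-invariance of both subspaces and identifies the restriction of $e^{At}$ to $\tilde X_s$ with $e^{A_s t}$, which is exponentially decaying by hypothesis. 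Finally, I would verify that the two constructions are mutually inverse: starting from a subspace decomposition and applying forward-then-reverse recovers the original $X_u, X_s$ because $\Phi$ is then the inverse of $(\pi_u, \pi_s)$; starting from a system decomposition and applying reverse-then-forward recovers $(A_u, B_u, C_u)$ and $(A_s, B_s, C_s)$ up to the isomorphism induced by $\Phi$, since the restrictions of $A, B, C$ to the images are intertwined with the original operators.

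\textbf{Main obstacle.} The delicate point is the reverse direction: extracting $A$-invariance and exponential decay of $e^{At}|_{\tilde X_s}$ from an isomorphism of relations rather than of semigroups. The argument depends on the fact that, by definition in \Cref{sect:inputoutput_relation}, a relation isomorphism is witnessed by a single topological isomorphism of state spaces, which therefore transports zero-input trajectories exactly. A secondary technicality is to confirm that the forward-direction $C_s$ is bounded on $\domain(A_s)$ with its graph norm; this follows from boundedness of $C$ on $\domain(A)$ together with continuity of $\pi_s$ and the fact that the graph norm on $\domain(A_s)$ is dominated by that on $\domain(A)$.
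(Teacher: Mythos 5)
Your proposal is correct and takes essentially the same route as the paper's proof: one direction restricts the semigroup, $A$, $B$, $C$ along the subspace decomposition (using $\domain(A)=\domain(A_u)\oplus\domain(A_s)$, which also settles your boundedness concern for $C_s$), and the other transports the product structure of $S_u+S_s$ through the single topological isomorphism of state spaces, identifying the semigroups via trajectories. One small fix: argue with all zero-input trajectories rather than zero-input, zero-output ones, since the latter need not emanate from a dense set of initial states, whereas zero-input classical solutions start densely in $\domain(A_u)\times\domain(A_s)$ and give the intertwining by density.
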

\begin{proof}
	$S \cong S_u + S_s$ implies that $S$ is equal, in some coordinates on $X$, to
	\begin{align*}
		\system(A_u, B_u, C_u) + \system(A_s, B_s, C_s) 
		&= \system\left( \begin{bmatrix} A_u & 0 \\ 0 & A_s \end{bmatrix},\, \begin{bmatrix} B_u \\ B_s \end{bmatrix},\, \begin{bmatrix} C_u & C_s \end{bmatrix} \right).
	\end{align*}
	This gives rise to a decomposition $X = X_u \oplus X_s$.

	Conversely, given $X = X_u \oplus X_s$, the transition operators $e^{At}$ writes in these coordinates as
	\begin{align*}
		e^{At}=\begin{bmatrix} T_u(t) & 0 \\ 0 & T_s(t) \end{bmatrix} : \begin{matrix} X_u \\ \oplus \\ X_s \end{matrix} 
		\rightarrow \begin{matrix} X_u \\ \oplus \\ X_s \end{matrix}.
	\end{align*}
	The restrictions $T_u$ and $T_s$ remain strongly continuous, and their generators $A_u$ and $A_s$ coincide with the restrictions of $A$ to $X_u$ and $X_s$ respectively.
	Furthermore, $\domain(A) = \domain(A_u) \oplus \domain(A_s)$.
	This implies that $S \cong \system\left( \begin{bmatrix} A_u & 0 \\ 0 & A_s \end{bmatrix},\, \begin{bmatrix} B_u \\ B_s \end{bmatrix},\, \begin{bmatrix} C_u & C_s \end{bmatrix} \right)$
	where $B_u$, $B_s$ are composition of $B$ with projections, and $C_u$, $C_s$ are restrictions of $C$.
\end{proof}

\Cref{pro:necessary} follows from the application of \Cref{lem:finite_unstable} and \Cref{pro:finite_unstable}.

\subsection{Characterization of systems with finite unstable parts}
Our later analysis will rely on some equivalent characterizations of the necessary conditions of the previous section.

For systems with finite unstable parts, stabilizability and detectability can be verified by analyzing only the ``unstable part''.
This simple result is stated below.

\begin{proposition} \label{lem:stabilizability} \stepcounter{refer}
	Assume system $S$ decomposes into $S_u + S_s$ as in \Cref{def:finite_unstable}. Then the following statements are equivalent:
	\begin{enumerate}[(a)] \itemsep 0mm
		\item\labellocal{item:a} $S$ is stabilizable (resp. detectable).
		\item\labellocal{item:b} $S_u$ is stabilizable (resp. detectable).
	\end{enumerate}
\end{proposition}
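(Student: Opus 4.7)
I will work in the coordinates provided by \Cref{pro:finite_unstable}, where $S$ is the direct sum of a finite-dimensional $S_u = \system(A_u, B_u, C_u)$ and an exponentially stable $S_s = \system(A_s, B_s, C_s)$, with $\domain(A) = X_u \oplus \domain(A_s)$ (note $\domain(A_u) = X_u$ since $X_u$ is finite-dimensional). Each of the four implications reduces to a standard linear-systems manipulation; I treat stabilizability and detectability separately.

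For stabilizability, the direction (a)$\Rightarrow$(b) is obtained by projecting trajectories: any stabilizing triple $(u, (x_u, x_s), y) \in S$ yields $(u, x_u, C_u x_u) \in S_u$ driving $x_u$ to zero, so the stabilizable initial conditions of $S_u$ contain the projection onto $X_u$ of a dense subset of $X$, hence are dense in $X_u$ and, by finite-dimensionality of $X_u$, equal to it. For (b)$\Rightarrow$(a), I would take a stabilizing linear feedback $K$ for $(A_u, B_u)$, apply $u(t) := K x_u(t)$ from an arbitrary $(x_u(0), x_s(0)) \in \domain(A)$, and observe that $x_u$ decays exponentially while $x_s$ is the classical solution of $\dot{x}_s = A_s x_s + B_s u$ driven by a smooth, exponentially decaying input; a standard convolution estimate using exponential stability of $e^{A_s t}$ then gives $x_s(t) \to 0$, and density of $\domain(A)$ in $X$ concludes.

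For detectability, (a)$\Rightarrow$(b) is immediate: any $(0, x_u, 0) \in S_u$ lifts to $(0, (x_u, 0), 0) \in S$, whose state goes to zero by detectability of $S$. For (b)$\Rightarrow$(a), given $(0, (x_u, x_s), 0) \in S$, the key observation is that $x_s(t) = e^{A_s t} x_s(0)$ decays exponentially in the graph norm of $A_s$, because $A_s x_s(t) = e^{A_s t} A_s x_s(0)$ and $A_s x_s(0) \in X_s$; since $C_s$ is bounded on $\domain(A_s)$, this forces $C_s x_s(t) \to 0$. The output constraint $C_u x_u + C_s x_s = 0$ then gives $C_u x_u(t) \to 0$, and finite-dimensional detectability of $(A_u, C_u)$ combined with variation of constants (choosing an observer gain $L$ with $A_u - L C_u$ Hurwitz and rewriting $\dot{x}_u = (A_u - L C_u) x_u + L (C_u x_u)$) expresses $x_u$ as the sum of an exponentially decaying free response and the convolution of an exponentially decaying kernel with an asymptotically vanishing forcing, hence $x_u(t) \to 0$.

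I do not anticipate a deep obstacle. The two subtleties to watch are (i) ensuring the trajectories constructed in (b)$\Rightarrow$(a) for stabilizability are genuine classical solutions in $S$, which forces the initial condition to lie in $\domain(A)$ and the feedback to be smooth enough to invoke standard semigroup regularity results, and (ii) in (b)$\Rightarrow$(a) for detectability, recognizing that $C_s$ is bounded only on $\domain(A_s)$, so decay of $\|x_s(t)\|_{X_s}$ alone would not be enough and one must propagate decay in the graph norm of $A_s$.
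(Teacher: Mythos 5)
Your proof is correct and, for the substantive direction (b)$\Rightarrow$(a) of stabilizability, follows the same route as the paper: choose $K$ with $A_u + B_u K$ Hurwitz, close the loop with $u = K x_u$, and use the exponentially stable cascade $\partial_t(x_u, x_s) = (A_u + B_u K, \, B_s K + A_s)$-type dynamics together with density of $\domain(A)$. The paper dispatches the remaining implications tersely ((a)$\Rightarrow$(b) is not written out, and detectability is settled ``by a dual argument''), whereas you prove them explicitly; in particular, your graph-norm decay of $x_s$ (needed because $C_s$ is bounded only on $\domain(A_s)$) followed by the observer-gain convolution estimate for $x_u$ is precisely the correct unpacking of that dual argument, so the two proofs agree in substance.
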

\begin{proof}
	``\reflocal{item:b} $\Rightarrow$ \reflocal{item:a}''
	Assume the finite-dimensional $S_u = \system(A_u, B_u, C_u)$ is stabilizable, then there exists $K$ such that $A_u + B_u K$ is Hurwitz. 
	Under the feedback $u = K x_u$, the system $S$ becomes
	\begin{align*}
		\partial_t \begin{bmatrix} x_u \\ x_s \end{bmatrix} &= \begin{bmatrix} A_u + B_u K & 0 \\ B_s K & A_s \end{bmatrix} \begin{bmatrix} x_u \\ x_s \end{bmatrix},
	\end{align*}
	which is exponentially stable.
	The detectability property follows by a dual argument.
\end{proof}

The decomposition $X = X_u \oplus X_s$ in \Cref{pro:finite_unstable} is typically not unique, but there is a canonical decomposition $X = X_{\bar{u}} \oplus X_{\bar{s}}$ where $X_{\bar{u}}$ is minimum among all subspaces containing the unstable modes.

\begin{proposition} \label{pro:finite_unstable2} \stepcounter{refer}
	Let $S$ be the $(U, X, Y)$-relation $\system(A, B, C)$.
	If $S$ has finite unstable part, then: 
	\begin{enumerate}[(a)]
		\item\labellocal{item:a} $\spec(A) = Z_{\bar{u}} \cup Z_{\bar{s}}$ where $Z_{\bar{u}} \subseteq \CCge{0}$ is finite and $Z_{\bar{s}} \subseteq \CCl{\omega}$ for some $\omega < 0$.
	\end{enumerate}

	Let $\Gamma : S^1 \rightarrow \CCge{\omega} \setminus \spec(A)$ be any simple closed curve positively encircling $Z_{\bar{u}}$, and consider the projection operator $P_{\bar{u}}$ defined by the functional calculus formula (see \cite[Lemma 2.5.7]{curtain95} for details)
	\begin{align*}
		P_{\bar{u}} = \frac{1}{2\pi i} \int_{\Gamma} (z - A)^{-1} dz.
	\end{align*}
	Set $X_{\bar{u}} = \Ima P_{\bar{u}}$ and $X_{\bar{s}} = \Ima P_{\bar{s}} = \Ima (I - P_{\bar{u}})$, then
	\begin{enumerate}[(a),resume]
		\item\labellocal{item:0} The restriction of $A$ to $A_{\bar{u}}$ and $A_{\bar{s}}$ satisfies $\spec(A_{\bar{u}}) \subseteq \CCge{0}$ and $\spec(A_{\bar{s}}) \subseteq \CCl{0}$.
		\item\labellocal{item:b} For any decomposition $X = X_1 \oplus X_2$ into closed $A$-invariant subspaces, $X_j = (X_j \cap X_{\bar{u}}) \oplus (X_j \cap X_{\bar{s}})$ for $j \in \{1, 2\}$.
		\item\labellocal{item:c} If the restriction of $e^{At}$ to $X_j$ in \reflocal{item:b} is exponentially decaying, then $X_j \cap X_{\bar{u}} = 0$.
		\item\labellocal{item:d} If $X = X_u \oplus X_s$ as in \Cref{pro:finite_unstable}, then there is a closed $A$-invariant subspace $X_r$ such that $X_u = X_{\bar{u}} \oplus X_r$ and $X_{\bar{s}} = X_s \oplus X_r$.
		\item\labellocal{item:e} $\dim X_{\bar{u}} < \infty$ and the restriction of $e^{At}$ to $X_{\bar{s}}$ is exponentially decaying.
	\end{enumerate}
\end{proposition}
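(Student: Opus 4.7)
My plan is to reduce the six items to two ingredients: the decomposition $X = X_u \oplus X_s$ guaranteed by $S$ having finite unstable part (via \Cref{pro:finite_unstable}) and the Dunford--Riesz functional calculus attached to the contour $\Gamma$. For (a), that decomposition gives $\spec(A) = \spec(A_u) \cup \spec(A_s)$, the first being finite and the second contained in $\CCle{-\epsilon}$ for some $\epsilon > 0$; defining $Z_{\bar{u}} := \spec(A) \cap \CCge{0}$ and $Z_{\bar{s}} := \spec(A) \setminus Z_{\bar{u}}$, the set $Z_{\bar{u}}$ is finite (a subset of $\spec(A_u)$) and $Z_{\bar{s}}$ is bounded above in real part by the maximum of $-\epsilon$ and the finitely many negative real parts in $\spec(A_u)$, yielding a suitable $\omega < 0$. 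With $\Gamma$ then well defined, standard Riesz-projection theory (cf.\ \cite[Lemma 2.5.7]{curtain95}) gives \reflocal{item:0}: $P_{\bar{u}}$ is a bounded projection, $X_{\bar{u}}$ and $X_{\bar{s}}$ are closed $A$-invariant subspaces, and the restricted spectra are exactly $Z_{\bar{u}}$ and $Z_{\bar{s}}$.

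The technical observation underlying \reflocal{item:b}--\reflocal{item:d} is that every $(z-A)^{-1}$ with $z \notin \spec(A)$, and therefore $P_{\bar{u}}$ itself, maps any closed $A$-invariant subspace $V$ into itself. For $\re z$ large, $(z-A)^{-1} v = \int_{0}^{\infty} e^{-zt} e^{At} v\,dt \in V$ whenever $v \in V$; analytic continuation extends this to all $z \notin \spec(A)$, and then through the integral defining $P_{\bar{u}}$. In particular $\spec(A|_V) \subseteq \spec(A)$. With $V = X_j$ this yields \reflocal{item:b} at once, since $X_j = P_{\bar{u}}(X_j) \oplus (I - P_{\bar{u}})(X_j) = (X_j \cap X_{\bar{u}}) \oplus (X_j \cap X_{\bar{s}})$. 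For \reflocal{item:c}, exponential decay of $e^{At}|_{X_j}$ forces $\spec(A|_{X_j}) \subseteq \CCl{0}$; combined with $\spec(A|_{X_j}) \subseteq \spec(A)$ this gives $\spec(A|_{X_j}) \subseteq Z_{\bar{s}}$, so the integrand $(z-A)^{-1}|_{X_j} = (z - A|_{X_j})^{-1}$ is analytic on and inside $\Gamma$, and Cauchy's theorem delivers $P_{\bar{u}}|_{X_j} = 0$. Hence $X_j \subseteq X_{\bar{s}}$ and in particular $X_j \cap X_{\bar{u}} = 0$.

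For \reflocal{item:d}, applying \reflocal{item:c} to $X_s$ gives $X_s \subseteq X_{\bar{s}}$. For the reverse inclusion $X_{\bar{u}} \subseteq X_u$, decompose $v \in X_{\bar{u}}$ as $v = v_u + v_s$ in $X_u \oplus X_s$ and observe $v = P_{\bar{u}} v = P_{\bar{u}} v_u + P_{\bar{u}} v_s = P_{\bar{u}} v_u \in X_u$, using that $P_{\bar{u}}$ annihilates $X_s \subseteq X_{\bar{s}}$ and preserves the $A$-invariant $X_u$. Thus \reflocal{item:b} applied to $X_u$ yields $X_u = X_{\bar{u}} \oplus X_r$ with $X_r := X_u \cap X_{\bar{s}}$, and the symmetric argument applied to $y = y_u + y_s \in X_{\bar{s}}$ (reading off $y_u \in X_r$) gives $X_{\bar{s}} = X_s \oplus X_r$. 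Finally \reflocal{item:e} drops out: $X_{\bar{u}} \subseteq X_u$ is finite-dimensional, and on $X_{\bar{s}} = X_s \oplus X_r$ the semigroup is block-diagonal with $e^{At}|_{X_s}$ exponentially decaying by assumption and $e^{At}|_{X_r}$ exponentially decaying because $X_r$ is finite-dimensional with $\spec(A|_{X_r}) \subseteq Z_{\bar{s}} \subseteq \CCl{\omega}$. The main obstacle I anticipate is handling the operator-valued Cauchy theorem in step \reflocal{item:c} --- specifically the identity $(z-A)^{-1}|_{X_j} = (z - A|_{X_j})^{-1}$ and the joint analyticity needed on and inside $\Gamma$; once that is in place, the remaining items are bookkeeping around Riesz projections.
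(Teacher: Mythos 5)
Your proposal is correct, and for parts (a), (b), (d), (e) it essentially coincides with the paper's proof: (a) from the decomposition supplied by finite unstable part, (b) from the fact that the Riesz projections respect every closed $A$-invariant subspace (the paper phrases this as commutation of $P_{\bar u}$ with the projections onto $X_1$ and $X_2$; your Laplace-transform-plus-analytic-continuation argument establishes the same underlying fact), and (d), (e) by the same bookkeeping with $X_r = X_u \cap X_{\bar s}$. Where you genuinely diverge is part (c): the paper restricts $A$ to $X_j \cap X_{\bar u}$, notes that this is simultaneously a restriction of $A_j$ (spectrum in $\CCl{0}$) and of $A_{\bar u}$ (spectrum in $\CCge{0}$), and concludes from ``generator with empty spectrum'' that the intersection is trivial; you instead show that on $X_j$ the integrand of the contour integral equals $(z - A|_{X_j})^{-1}$, which is analytic on and inside $\Gamma$, so Cauchy's theorem gives $P_{\bar u}|_{X_j} = 0$ and hence the stronger statement $X_j \subseteq X_{\bar s}$. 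Your route is arguably more robust: ``empty spectrum implies zero space'' is false for general $C_0$-semigroup generators on infinite-dimensional spaces, so the paper's phrasing implicitly relies on extra spectral information of the kind your Cauchy argument makes explicit; the price is the operator-valued contour argument you flag, which is standard. One small overstatement to patch: analytic continuation yields $(z-A)^{-1}V \subseteq V$ only on the connected component of the resolvent set containing a right half-plane, not on all of $\CC \setminus \spec(A)$; this is harmless here because $\CCge{\omega} \setminus Z_{\bar u}$ (a closed half-plane minus a finite set) is connected, lies in the resolvent set, and contains both $\Gamma$ and the large-real-part region, which is all your argument for (b), (c) actually uses.
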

\begin{proof}
	``Proof of \reflocal{item:a}''
	Since $S$ has finite unstable part, $X = X_u \oplus X_s$ as in \Cref{def:finite_unstable_2}.
	Let $A_u$ and $A_s$ be the restrictions of $A$ to $X_u$ and $X_s$, then $\spec(A) = \spec(A_u) \cup \spec(A_s)$.
	Setting $Z_{\bar{u}} = \spec(A_u) \cap \CCge{0}$ and $Z_{\bar{s}} = (\spec(A_u) \setminus \CCge{0}) \cup \spec(A_s)$ completes the proof.
	Note that $\spec(A_s) \subseteq \CCl{\omega}$ because $e^{A_s t}$ is exponentially decaying.
	
	``Proof of \reflocal{item:0}''
	see \cite[Lemma 2.5.7]{curtain95}.
	
	``Proof of \reflocal{item:b}''
	By the definition, $P_{\bar{u}}$ and $P_{\bar{s}}$ commute with any operator commuting with the resolvents of $A$.
	In particular, they commute with the canonical projection on $X_1 \oplus X_2$, denoted by $P_1$ and $P_2$.
	This implies that $P_1 P_{\bar{u}}$, $P_1 P_{\bar{s}}$, $P_2 P_{\bar{u}}$, $P_2 P_{\bar{s}}$ are projections summing to identity.
	By commutativity, $\Ima P_1 P_{\bar{u}} \subseteq X_1 \cap X_{\bar{u}}$.
	Since the projections sum to identity, this inclusion is an equality.
	The same holds for the other projections.
	The conclusion follows from $\Ima P_j = \Ima P_j P_{\bar{u}} + \Ima P_j P_{\bar{s}}$.
	
	``Proof of \reflocal{item:c}''
	The restriction of $A$ to $X_j \cap X_{\bar{u}}$ is also the restriction of $A_j$ or $A_{\bar{u}}$ to $X_j \cap X_{\bar{u}}$.
	Since $\spec(A_{\bar{u}}) \subseteq \CCge{0}$ and $\spec(A_j) \subseteq \CCl{0}$ ($A_j$ generates an exponentially decaying semigroup), the restriction of $A$ to $X_j \cap X_{\bar{u}}$ is a semigroup generator with empty spectrum.
	Therefore, $X_j \cap X_{\bar{u}} = 0$.
	
	``Proof of \reflocal{item:d}''
	By \reflocal{item:b}, $X$ is the direct sum of $X_u \cap X_{\bar{u}}$, $X_u \cap X_{\bar{s}}$, $X_s \cap X_{\bar{u}}$, and $X_s \cap X_{\bar{s}}$.
	By \reflocal{item:c}, $X_s \cap X_{\bar{u}} = 0$.
	The conclusion holds with $X_r = X_u \cap X_{\bar{s}}$.
	
	``Proof of \reflocal{item:e}''
	follows from \reflocal{item:d} and \reflocal{item:0}.
\end{proof}

%The decomposition in \Cref{asp:decomp} is generally not orthogonal. 
%An undesirable feature of such decompositions is that projections to the components do not have norm $1$. 
%Orthogonal decompositions are better suited for our analysis. 
%Such decompositions can always be produced by the following taking orthogonal complements of $A$-invariant subspaces.
%
%Specifically, given any closed finite-dimensional $A$-invariant subspace $X_1$, let $X_2 = (X_1)^{\perp}$, then $A$ has the form
%\begin{align*}
%	\begin{bmatrix} A_1 & A_{12} \\ 0 & A_2 \end{bmatrix} : 
%	\begin{matrix} X_1 \\ \oplus \\ X_2 \end{matrix} \rightarrow \begin{matrix} X_1 \\ \oplus \\ X_2 \end{matrix}
%\end{align*}
%with respect to the decomposition $X = X_1 \oplus X_2$.
%The operator $A_2$ on $X_2$ has domain $X_2 \cap \domain(A)$.
%Furthermore, $A_2$ is the generator of the semigroup obtained by restricting $\pi_2 e^{At}$ to $X_2$, where $\pi_2$ is the standard projection.
%The operator $A_{12}$ from $X_2$ to $X_1$ has domain $X_2 \cap \domain(A)$ apriori, but it extends uniquely to a bounded operator $X_2 \rightarrow X_1$. 

\section{Sufficient conditions} \label{sect:sufficient}
It turns out that, under some mild assumptions on the operator $A$, the conditions in \Cref{pro:necessary} are also sufficient for the solvability of \Cref{prob:control}.     
%This section presents sufficient conditions for solving \Cref{prob:control}.
%A particular conclusion we will derive is a converse of \Cref{pro:necessary} under some assumptions on $A$.

\begin{definition} \label{def:schauderbasis}
	A \define{Schauder subspace-basis of $X$} is a countable collection $\{X_j \mid j \in \NN\}$ of finite-dimensional subspaces such that every $x \in X$ can be written as $x=\sum_{j \in \NN} x_j$ for a unique sequence $x_j \in X_j$.
	We say that the subspace-basis $\{X_j\}_{j \in \NN}$ is \define{adapted to $A$} if every $X_j$ is $A$-invariant.
\end{definition}

\begin{theorem} \label{thm:control_schauderbasis}
	Let $S = \system(A, B, C)$ be a $(U, X, Y)$-relation such that $X$ has a Schauder subspace-basis adapted to $A$.
	\Cref{prob:control} is solvable for $S$ if and only if $S$ is stabilizable, detectable, and has finite unstable part.
\end{theorem}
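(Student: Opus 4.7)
Necessity is \Cref{pro:necessary}, so only sufficiency requires proof. My plan follows the observer-based paradigm of \cite{lhachemi22}, but replaces the Lyapunov argument used there with a small-gain analysis based on the novel input-output gain of the present paper.

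First, apply \Cref{pro:finite_unstable} together with \Cref{pro:finite_unstable2} to obtain the canonical decomposition $S \cong S_{\bar u} + S_{\bar s}$, where $S_{\bar u}$ is finite-dimensional with $\spec(A_{\bar u}) \subseteq \CCge{0}$ and $S_{\bar s}$ is exponentially stable. By \Cref{lem:stabilizability}, stabilizability and detectability of $S$ are equivalent to those of $S_{\bar u}$. Next, exploit the Schauder subspace-basis: since every $X_j$ is finite-dimensional and $A$-invariant, \Cref{pro:finite_unstable2} yields $X_j = (X_j \cap X_{\bar u}) \oplus (X_j \cap X_{\bar s})$. For an index $N$, form the finite-dimensional $A$-invariant subspace $X_N := X_{\bar u} \oplus \bigoplus_{j \leq N}(X_j \cap X_{\bar s})$; its Schauder complement $X_N^{\mathrm{res}}$ is closed, $A$-invariant, and contained in $X_{\bar s}$, so the restriction of $A$ to $X_N^{\mathrm{res}}$ generates an exponentially decaying semigroup. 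This yields a decomposition $S \cong S_N + S_N^{\mathrm{res}}$ with $S_N$ finite-dimensional; by the same argument as in \Cref{lem:stabilizability}, stabilizability and detectability of $S$ descend to $S_N$.

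Since $S_N$ is finite-dimensional, stabilizable, and detectable, a standard Luenberger observer-based design produces a finite-dimensional controller $T$ such that $\sysclose(T \circ S_N)$ is $\beta$-exponentially stable for any prescribed $\beta > 0$. Apply the same $T$ to the full system $S$; by \Cref{pro:closedloop_semigroup}, $\sysclose(T \circ S)$ is a well-posed semigroup system which can be viewed as the feedback interconnection of the exponentially stable $\sysclose(T \circ S_N)$ with the exponentially stable $S_N^{\mathrm{res}}$, coupled through the restriction of $C$ to $X_N^{\mathrm{res}}$.

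The main obstacle is precisely this coupling: because $C$ is generally unbounded on $X$, classical operator norms on $X$ cannot control the loop gain. Here I would invoke the novel input-output gain of the paper to estimate the response of $S_N^{\mathrm{res}}$ through $C$ in a norm consistent with the interconnection, and show that this gain tends to zero as $N \to \infty$. The Schauder subspace-basis is essential at this point: the tail projections converge strongly to zero on $\domain(A)$, which is precisely the domain on which $C$ is continuous. Combined with the prescribed decay rate $\beta$ of $\sysclose(T \circ S_N)$, taking $N$ sufficiently large makes the small-gain condition strict, and exponential stability of the full closed loop follows.
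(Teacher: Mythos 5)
Your overall architecture (canonical unstable/stable split, truncation along the Schauder subspace-basis, observer-based finite-dimensional controller, small-gain closure) is the same as the paper's, but the decisive quantitative step is not established. The small-gain argument needs the IO-$(1,0)$-gain of the residual $S_N^{\mathrm{res}} \cong \system(A_N^{\mathrm{res}}, P_N^{\mathrm{res}}B, C|_{X_N^{\mathrm{res}}})$ to become arbitrarily small, and your justification -- that the tail projections converge strongly to zero on $\domain(A)$, where $C$ is continuous -- does not deliver this: the gain is a uniform (operator-norm type) quantity over all inputs, and strong convergence of $P_N^{\mathrm{res}}$ to zero does not give $\|P_N^{\mathrm{res}}B\| \to 0$ unless $B$ is compact or finite rank. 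Nor can smallness come from an improving decay rate of the tail: in the hyperbolic case (\Cref{ex:wave1}) the decay rate of the residual does not improve with $N$ at all, and even in the parabolic case the bound of \Cref{lem:io_gain_weak} tends to $a\|B\|\,\|C\|_{\domain(A)\rightarrow Y}$, not to zero, as the decay rate grows. The paper instead factors the residual gain as the gain of $\system(A_s, I, C_s)$ (bounded independently of the decomposition) times $\|P_s B\|$ (\Cref{pro:quasifinite_1}), makes $\|P_s B\|$ small via the Schauder basis when $B$ has finite rank (\Cref{pro:quasifinite_2}), and -- since \Cref{thm:control_schauderbasis} does not assume finite-rank $B$ -- first replaces $S$ by $S \circ F$ for a finite-rank $F$ chosen so that the unstable part remains stabilizable. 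This reduction to a finite-rank input operator is essential and is entirely missing from your proposal.

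There is also a uniformity issue on the controller side of the loop. The small-gain trade-off requires the IO-$(0,1)$-gain of the controller-side subsystem (the system $R$ in \Cref{fig:closedloop_diagram}) to be bounded independently of the truncation level while the residual gain shrinks. Designing a Luenberger observer for all of $S_N$ with a prescribed decay rate $\beta$ does not secure this: as $N$ grows the data of $S_N$ change, and nothing in your argument bounds the resulting loop gain uniformly in $N$; a fast $\beta$ for the finite-dimensional closed loop is neither necessary nor sufficient here. The paper's proof of \Cref{thm:control} resolves this by placing the feedback and injection gains only on the canonical unstable block $S_{\bar{u}}$ and leaving the stable finite-dimensional block $S_r$ untouched inside the observer, so the gain of $R$ depends only on $(A_{\bar{u}}, B_{\bar{u}}, C_{\bar{u}}, K_{\bar{u}}, L_{\bar{u}})$ and is independent of $S_r$ (item (i) in that proof). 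Without either this uniform bound or a correct proof that the residual gain vanishes, the hypothesis of \Cref{pro:smallgain} cannot be met, so the proposal has a genuine gap precisely at its central step.
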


\begin{remark}
It is worth noting that operators $A$ with adapted Schauder subspace-basis include
\begin{itemize}
	\item self-adjoint operator with compact resolvents on a separable Hilbert space;
	\item spectral operators \cite{dunford71} with compact resolvents;
	\item Riesz-spectral operators \cite[Sect. 2.3]{curtain95}.
\end{itemize}
\end{remark}

%\subsection{Proof of \Cref{thm:control_schauderbasis}  } \label{sect:proof_schaud}
The main contribution of the remaining part of this section is to prove \Cref{thm:control_schauderbasis} by constructing a finite-dimensional controller stabilizing $S$. 
%A slight generalization of \Cref{thm:control_schauderbasis} is obtained in \Cref{thm:control} 
%As by product, with \Cref{thm:control}, we also provide a slight generalization of \Cref{thm:control_schauderbasis}. 
The design strategy is outlined below.

Given a stabilizable, detectable system $S$ with finite unstable part, write $S \cong S_u + S_s$ as in \Cref{def:finite_unstable}.
Because $S_u = \system(A_u, B_u, C_u)$ is a stabilizable and detectable finite-dimensional system, there exists an observer-based controller $T = \system(A_u + LC_u + B_uK, -L, K)$ such that $\sysclose(T \circ S_u)$ is exponentially stable. However, we seek exponential stability of the full feedback system $\sysclose(T \circ S)$ shown in \Cref{fig:closedloop_diagram}.
\begin{figure}[h]
%\begin{equation} \label{eq:closedloop_diagram} 
\centering \begin{tikzpicture}
	\tikzstyle{sys}=[rectangle, draw, minimum width=10mm, minimum height=8mm];
	\node[sys] at (0,0) (S_u) {$S_u$};
	\node[sys] at (0,-1.2) (S_s) {$S_s$};
	\node[sys] at (0,1.2) (T) {$T$};
	\node[circle, draw, scale=0.7] at (1.5, 0) (out) {$+$};
	\coordinate (in) at (-1.5, 0);
	%\node[circle, draw] at (-1.5, 1.2) (in) {};
	
	\draw[->] (S_u) -- (out);
	\draw[->] (S_s) -| node[above, pos=0.25] {$y_s$} (out);
	\draw[->] (out) |- node[above, pos=0.75] {$y$} (T);
	
	\draw[->] (T) -| (in) -- (S_u);
	\draw[->] (in) |- node[above, pos=0.75] {$u$} (S_s);
	
	%\draw[dotted, thick] (-2, -0.6) rectangle (2, 1.8) node[right] {$R$};
	\draw[dotted, thick] (-2, -0.6) -- (2, -0.6) --node[right] {$R$} (2, 1.8) -- (-2, 1.8) -- cycle;
\end{tikzpicture} %\end{equation}
\caption{Closed-loop system $\sysclose(T \circ S)$} \label{fig:closedloop_diagram}
\end{figure}

Let $R$ be the system labelled in \Cref{fig:closedloop_diagram} with input $y_s$ and output $u$, then $\sysclose(T \circ S) \cong \sysclose(R \circ S_s)$.
Because $R$ and $S_s$ are exponentially stable, we expect $\sysclose(R \circ S_s)$ to remain exponentially stable if the IO gains of $R$ and $S_s$ are sufficiently small.
Furthermore, the freedom of choosing the decomposition $S \cong S_u + S_s$ allows us to, in some situations, reduce the IO gain of $S_s$ arbitrarily.
This can be used to prove exponential stability of $\sysclose(T \circ S)$.

To formalize the idea above, we will need an adapted small-gain theorem presented in \Cref{sect:small_gain}.
This small-gain result (\Cref{pro:smallgain}) will be used in \Cref{sect:control_design} to prove a generalization of \Cref{thm:control_schauderbasis} based on a weaker assumption called quasi-finiteness.
In \Cref{sect:quasifinite}, we will recover \Cref{thm:control_schauderbasis} by showing that having a Schauder subspace-basis implies quasi-finiteness.

%we first define IO-gains and find sufficient conditions on IO-gains of $R$ and $S_s$ that guarantee exponential stability of $\sysclose(R \circ S_s)$.
%This will lead to a slight generalization of \Cref{thm:control_schauderbasis}, presented in \Cref{thm:control}.

\subsection{Small-gain analysis} \label{sect:small_gain}

\begin{definition} \label{def:io_gain}
	Fix $\beta \in \RR$.
	A $(U, X, Y)$-relation $S$ has \define{finite $\beta$-IO-gain} if there exists $\alpha \geq 0$ and a function $b : S \rightarrow \RRge{0}$ such that\footnote{The subscript $s$ in front of $[0,t]$ denotes the symbol used as the dummy variable.}
	\begin{align*}
		\|e^{\beta s} y(s)\|_{L^{\infty}({_s}[0,t], Y)} \leq \alpha \| e^{\beta s} u(s) \|_{L^{\infty}({_s}[0,t], U)} + b(u,x,y) \qquad \forall (u,x,y) \in S,\, t \geq 0.
	\end{align*}
	Equivalently,
	\begin{align*}
		\| e^{\beta t} y(t) \| \leq \alpha \| e^{\beta s} u(s) \|_{L^{\infty}({_s}[0,t], U)} + b(u,x,y) \qquad \forall (u,x,y) \in S,\, t \geq 0.
	\end{align*}
	The \define{$\beta$-IO-gain} of $S$ is the infimum over all $\alpha$ for which the inequality holds.
	Similarly, the \define{$\beta$-IS-gain} of $S$ is the infimum over all $\alpha$ satisfying
	\begin{align*}
		\| e^{\beta t} x(t) \| \leq \alpha \| e^{\beta s} u(s) \|_{L^{\infty}({_s}[0,t], U)} + b(u,x,y) \qquad
		\forall (u, x, y) \in S,\, t \geq 0.
	\end{align*}
\end{definition}

It is evident that the $\beta$-IO-gain $g(\beta)$ and the $\beta$-IS-gain $h(\beta)$ of $S$ are non-decreasing functions of $\beta$.
Furthermore, the $\beta$-IO-gain of $S_2 \circ S_1$ is bounded by $g_1(\beta) g_2(\beta)$, and the $\beta$-IS-gain of $S_2 \circ S_1$ is bounded by $h_1(\beta) + h_2(\beta) g_1(\beta)$.
Another trivial observation is the following version of the small-gain theorem.

\begin{lemma}[Small-gain] \label{lem:smallgain}
	Fix $\beta \in \RR$.
	If a $(U, X, U)$-relation $S$ has a finite $\beta$-IS-gain and a $\beta$-IO-gain strictly less than $1$, then $\sysclose(S)$ is $\beta$-exponentially stable.
\end{lemma}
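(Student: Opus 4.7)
The plan is to exploit the feedback constraint $u = y$ in $\sysclose(S)$ to absorb the input bound into the output bound (this is where the $\beta$-IO-gain being strictly less than $1$ is needed), and then propagate the resulting bound on $y$ to the state via the finite $\beta$-IS-gain.

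Concretely, fix $\alpha \in [0,1)$ strictly greater than the $\beta$-IO-gain of $S$ and any finite $\alpha' \geq 0$ strictly greater than its $\beta$-IS-gain, together with the two associated functions $b_{\text{IO}}, b_{\text{IS}} : S \to \RRge{0}$ from \Cref{def:io_gain}. A trajectory of $\sysclose(S)$ corresponds to some $(u,x,y) \in S$ with $u = y$. Specializing the IO-gain inequality to this trajectory and taking the supremum in $s \in [0,t]$ of the left-hand side gives
\begin{align*}
	\|e^{\beta s} y(s)\|_{L^{\infty}({_s}[0,t], U)} \leq \alpha \|e^{\beta s} y(s)\|_{L^{\infty}({_s}[0,t], U)} + b_{\text{IO}}(u,x,y),
\end{align*}
which, since $\alpha < 1$, absorbs to yield $\|e^{\beta s} y(s)\|_{L^{\infty}({_s}[0,t], U)} \leq b_{\text{IO}}(u,x,y)/(1-\alpha)$ uniformly in $t \geq 0$. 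Plugging this into the IS-gain inequality with $u = y$ delivers
\begin{align*}
	\|x(t)\| \leq e^{-\beta t}\left(\frac{\alpha'\, b_{\text{IO}}(u,x,y)}{1-\alpha} + b_{\text{IS}}(u,x,y)\right),
\end{align*}
which already exhibits the $e^{-\beta t}$ decay.

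The main obstacle I expect is matching the definition of $\beta$-exponential stability, which requires the decay constant to be dominated by $\|x(0)\|$ uniformly over closed-loop trajectories. The right-hand side above a priori depends on the full triple $(u,x,y)$ through the functions $b_{\text{IO}}$ and $b_{\text{IS}}$, whose form is left unspecified in \Cref{def:io_gain}. I would resolve this by invoking the linearity of $S$: by superposition, both $b_{\text{IO}}$ and $b_{\text{IS}}$ can be chosen to be positive-homogeneous of degree one in the trajectory, and a routine argument then shows that the minimal such choices are bounded by a constant times $\|x(0)\|$. Combining this with the display above yields $\|x(t)\| \lesssim e^{-\beta t} \|x(0)\|$ uniformly, as required.
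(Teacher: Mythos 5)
Your first three steps are precisely the paper's own proof: specialize the IO inequality to $u=y$, absorb using $\alpha<1$ to get $\sup_{s\in[0,t]}\|e^{\beta s}y(s)\|\leq b_{\mathrm{IO}}/(1-\alpha)$, then feed this into the IS inequality. The paper stops there, concluding only that $e^{\beta t}u(t)$ and $e^{\beta t}x(t)$ are bounded trajectory by trajectory. The genuine gap is your final step, where you try to upgrade this to the uniform bound $\|x(t)\|\lesssim e^{-\beta t}\|x(0)\|$ by claiming that, via linearity, the minimal admissible $b_{\mathrm{IO}},b_{\mathrm{IS}}$ are bounded by a constant times $\|x(0)\|$. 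This does not work. First, the lemma is stated for an arbitrary $(U,X,U)$-relation, so there is no linearity to invoke; \Cref{def:io_gain} places no constraint at all on how $b$ depends on the trajectory, and from the stated hypotheses alone one simply cannot extract a constant tied to $\|x(0)\|$ (one can build relations with zero input and output, finite gains, and states arbitrarily large compared to $\|x(0)\|$). Second, even granting linearity and restricting to the relations $\system(A,B,C)$ that motivate the whole construction, the claim about the minimal $b$ is false when $C$ is unbounded: evaluating the IO inequality at $t=0$ along a closed-loop trajectory ($u=y$) forces $b_{\mathrm{IO}}(u,x,u)\geq(1-\alpha)\|y(0)\|=(1-\alpha)\|Cx(0)\|$, which is not dominated by a constant times $\|x(0)\|_X$; consistently, the explicit $b$ produced in \Cref{lem:io_gain_weak} is proportional to $\|x(0)\|_{\domain(A)}$, not $\|x(0)\|_X$. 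Positive homogeneity of degree one in the (infinite-dimensional) trajectory gives no bound in terms of the initial state.

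So what you have actually proved is the per-trajectory decay $\sup_{t\geq0}e^{\beta t}\|x(t)\|<\infty$, which is exactly what the paper's proof establishes. The uniformity you are worried about is real, but in the paper it is not obtained inside this lemma: it is recovered at the point of application, \Cref{pro:smallgain}, where the closed loop is realized as $\system(H,0,0)$ for a semigroup generator $H$ (via \Cref{pro:closedloop_semigroup}), and the per-trajectory exponential bound is converted into an operator-norm bound by the uniform boundedness principle together with the graph-norm isomorphism $A-\lambda\colon\domain(A)\to X$ (the unnamed lemma following \Cref{pro:smallgain}). If you want the uniform constant, that additional linear-semigroup structure is where it must come from, not from properties of the offset functions $b$.
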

\begin{proof}
	$(0, x, 0) \in \sysclose(S)$ iff $(u, x, u) \in S$ for some $u \in C^{0}_{\local}$.
	Since there exists $\alpha \in (0,1)$ such that
	\begin{align*}
		\|e^{\beta s} u(s)\|_{L^{\infty}({_s}[0,t], U)} \leq \alpha \| e^{\beta s} u(s) \|_{L^{\infty}({_s}[0,t], U)} + b(u,x,u) \qquad \forall t \geq 0,
	\end{align*}
	it follows that $e^{\beta t} u(t) \in L^{\infty}({_t}\RRge{0}, U)$.
	By finiteness of the $\beta$-IS-gain, $e^{\beta t} x(t) \in L^{\infty}({_t}\RRge{0}, X)$.
\end{proof}

Using the following definition of IS- and IO-gains, $\sysclose(S)$ is exponentially stable if $S$ has a finite IS-gain and an IO-gain less than $1$.

\begin{definition}
	Let $g(\beta)$ be the $\beta$-IO-gain of $S$, the IO-gain of $S$ is defined as $\inf_{\beta > 0} g(\beta) = \lim_{\beta \rightarrow 0^+} g(\beta)$.
	The IS-gain of $S$ is analogously defined.
\end{definition}

To apply \Cref{lem:smallgain} on the closed-loop system in \Cref{fig:closedloop_diagram}, we need estimates on the IO- and IS-gains of $R$ and $S_s$.
However, when $C$ is unbounded, $\system(A, B, C)$ may have infinite IO-gain even if it is exponentially stable.
This issue is resolved by considering IO-gains with respect to the $C^n$-norms:

\begin{definition}
	Fix $\beta \in \RR$ and $n, m \in \ZZge{0}$.
	A $(U, X, Y)$-relation $S$ has \define{finite $\beta$-IO-$(n,m)$-gain} if $\{y \mid (u, x, y) \in S,\, u \in C^{n}_{\local}\} \subseteq C^m_{\local}$ and there exists $\alpha \geq 0$ and $b : S \rightarrow \RRge{0}$ such that
	\begin{align*}
		\| e^{\beta t} \partial_t^k y(t) \| \leq \alpha \max_{j \in \{0, \ldots, n\}} \| e^{\beta s} \partial_s^j u(s) \|_{L^{\infty}({_s}[0,t], U)} + b(u,x,y)
	\end{align*}
	for all $k \in \{0, \ldots, m\}$, $t \geq 0$, and $(u, x, y) \in S$ satisfying $u \in C^{n}_{\local}$.
	The \define{$\beta$-IO-gain} of $S$ is the infimum over all $\alpha$ for which the inequality holds.
	Similarly, the \define{$\beta$-IS-gain} of $S$ is the infimum over all $\alpha$ satisfying
	\begin{align*}
		\| e^{\beta t} \partial_t^k x(t) \| \leq \alpha \max_{j \in \{0, \ldots, n\}} \| e^{\beta s} \partial_s^j u(s) \|_{L^{\infty}({_s}[0,t], U)} + b(u,x,y)
	\end{align*}
	for all $k \in \{0, \ldots, m\}$, $t \geq 0$, and $(u, x, y) \in S$ satisfying $u \in C^{n}_{\local}$.
	The IO-$(n,m)$-gain of $S$ is $\inf_{\beta > 0} g(\beta) = \lim_{\beta \rightarrow 0^+} g(\beta)$, where $g$ is the $\beta$-IO-$(n,m)$-gain of $S$.
	The IS-$(n,m)$-gain is analogously defined.
\end{definition}

The next result implies that every system of the form $\system(A, B, C)$ has a finite IO-$(1,0)$-gain.

\begin{lemma} \label{lem:io_gain_weak}
	If $\| e^{At} \| \leq a e^{-\alpha t}$ for some $a \geq 1$ and $\alpha \in \RR$, then the $\beta$-IS-$(1,1)$-gain $h(\beta)$ and the $\beta$-IO-$(1,0)$-gain $g(\beta)$ of $S = \system(A, B, C)$ satisfy
	\begin{align*}
		h(\beta) &\leq \max\left( \frac{a \|B\|}{\alpha - \beta},\, (a+1)\|B\| \right), \\
		g(\beta) &\leq a \|B\| \|C\|_{\domain(A) \rightarrow Y} \frac{1 + \alpha - \beta}{\alpha - \beta} 
	\end{align*}
	for all $\beta < \alpha$.
\end{lemma}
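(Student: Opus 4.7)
The plan is to apply the variation of constants formula $x(t) = e^{At} x(0) + \int_0^t e^{A(t-s)} B u(s) \, ds$ together with the exponential bound $\|e^{At}\| \leq a e^{-\alpha t}$. Multiplying by $e^{\beta t}$, factoring $e^{\beta s}$ inside the integrand, and using $\beta < \alpha$, one gets
\begin{equation*}
\bigl\| e^{\beta t} {\textstyle\int_0^t} e^{A(t-s)} B u(s) \, ds \bigr\| \leq a\|B\| \cdot \tfrac{1 - e^{-(\alpha-\beta)t}}{\alpha-\beta} \cdot \|e^{\beta s} u(s)\|_{L^\infty({_s}[0,t], U)} \leq \tfrac{a\|B\|}{\alpha-\beta} \|e^{\beta s} u(s)\|_{L^\infty({_s}[0,t], U)}.
\end{equation*}
The transient $a e^{-(\alpha-\beta) t}\|x(0)\|$ is absorbed into the residual $b(u,x,y)$, yielding the $\frac{a\|B\|}{\alpha-\beta}$ entry of the $\max$ that bounds the $k = 0$ component of $h(\beta)$.

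For the $k = 1$ component, I would interpret $\dot{x}$ as the mild solution of the differentiated abstract Cauchy problem $\partial_t \dot{x} = A \dot{x} + B \dot{u}$ with initial value $\dot{x}(0) = A x(0) + B u(0) \in X$, which, after integration by parts in the original mild solution, yields $\dot{x}(t) = A e^{At} x(0) + e^{At} B u(0) + \int_0^t e^{A(t-s)} B \dot{u}(s) \, ds$. The integral is estimated exactly as in the first step against $\|e^{\beta s} \dot{u}\|_{L^\infty}$; the $\|A e^{At} x(0)\|$ term decays and is absorbed into $b$; and the boundary term $e^{\beta t} e^{At} B u(0)$ is bounded by $a\|B\|\|u(0)\|$. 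Depending on whether $\|u(0)\|$ is placed in $b$ or majorized by $\|e^{\beta s} u(s)\|_{L^\infty({_s}[0,t], U)}$ and added to the gain coefficient—together with the trivial estimate $\|\dot{x}\| \leq \|Ax\| + \|B\|\|u\|$—one obtains either $\frac{a\|B\|}{\alpha-\beta}$ or roughly $(a+1)\|B\|$, which are precisely the two entries of the stated maximum.

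For the IO-$(1,0)$-gain, the output $y = Cx$ is controlled via the graph norm, $\|y(t)\| \leq \|C\|_{\domain(A) \to Y}(\|x(t)\| + \|A x(t)\|)$, and substituting $A x(t) = \dot{x}(t) - B u(t)$ gives $\|e^{\beta t} y(t)\| \leq \|C\|_{\domain(A) \to Y}\bigl(\|e^{\beta t} x(t)\| + \|e^{\beta t} \dot{x}(t)\| + \|B\| \|e^{\beta t} u(t)\|\bigr)$. Plugging in the IS-$(1,1)$-estimates from the first two steps produces two contributions—one of order $\frac{1}{\alpha-\beta}$ from the integral terms and one of order $1$ from the boundary and direct $Bu(t)$ terms—which combine into the $\frac{1 + \alpha - \beta}{\alpha-\beta}$ factor once the common $a\|B\|\|C\|_{\domain(A) \to Y}$ is factored out.

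I expect the main difficulty to be (i) rigorously justifying the differentiated mild-solution formula for $\dot{x}$ under only the regularity $x(0) \in \domain(A)$ and $u \in C^1_{\local}$, rather than the stronger $x(0) \in \domain(A^2)$ that a naive differentiation would require; and (ii) the careful allocation of trajectory-dependent initial-value quantities ($\|x(0)\|$, $\|A x(0)\|$, $\|u(0)\|$, $\|\dot{x}(0)\|$) between the residual $b(u,x,y)$ and the gain coefficient, since only the right allocation recovers the tight $\max$ in $h(\beta)$ and the specific $\frac{1+\alpha-\beta}{\alpha-\beta}$ shape in $g(\beta)$, instead of looser additive expressions produced by a crude triangle-inequality estimate.
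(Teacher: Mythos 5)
Your proposal is correct and follows essentially the same route as the paper: variation of constants for $x$, the integrated-by-parts identity $\dot x(t)=Ae^{At}x(0)+e^{At}Bu(0)+\int_0^t e^{A(t-s)}B\dot u(s)\,ds$ (the paper writes the equivalent formula for $Ax(t)$ and then uses $\dot x=Ax+Bu$), and the graph-norm bound $\|y(t)\|\le\|C\|_{\domain(A)\rightarrow Y}\|x(t)\|_{\domain(A)}$, with initial-value quantities absorbed into $b(u,x,y)$ and $\|u(0)\|$ majorized by the weighted sup norm, exactly as in the paper's constant bookkeeping. Your concern (i) is not a real obstacle: elements of $\system(A,B,C)$ are by definition classical solutions with $x\in C^{0}_{\local}(\RRge{0},\domain(A))$, and the standard semigroup fact that $u\in C^{1}_{\local}$ implies $\int_0^t e^{A(t-s)}Bu(s)\,ds\in\domain(A)$ together with the stated identity needs only $x(0)\in\domain(A)$, which is precisely what the paper invokes.
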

\begin{proof}
	Fix $(u, x, y) \in S$ satisfying $u \in C^1_{\local}$.
	By the assumptions $\| e^{At} \| \leq a e^{-\alpha t}$ and $\beta < \alpha$, the identity $x(t) = e^{At} x(0) + \int_{0}^{\infty} e^{A(t-s)} B u(s) ds$ yields the inequality 
	\begin{align*}
		\| e^{\beta t} x(t) \|_{X} &\leq a e^{-(\alpha - \beta)t} \|x(0)\|_{X}
		 + a \|B\| \|e^{\beta s} u(s)\|_{L^{\infty}([0,t])} \int_{0}^{t} e^{-(\alpha - \beta)(t-s)} ds \\
		&\leq a \|x(0)\|_{X} + \frac{a \|B\|}{\alpha - \beta} \|e^{\beta s} u(s)\|_{L^{\infty}([0,t])}.
	\end{align*}
	Since $x(0) \in \domain(A)$ and $u \in C^{1}_{\local}$, we get the expression $A x(t) = A e^{At} x(0) + \int_{0}^{t} e^{A(t-s)} B \dot{u}(s) ds + e^{At} B u(0) - B u(t)$ and the inequality
	\begin{align*}
		\| e^{\beta t} Ax(t) \|_{X} &\leq a \|Ax(0)\|_{X} + \frac{a \|B\|}{\alpha - \beta} \|e^{\beta s}\dot{u}(s)\|_{L^{\infty}([0,t])} + a\|B\| \|e^{\beta s}u(s)\|_{L^{\infty}([0,t])}.
	\end{align*}
	Combined with $\dot{x} = Ax + Bu$, we obtain the desired bound on the $\beta$-IS-$(1,1)$-gain $h(\beta)$.
	Since $\|x(t)\|_{\domain(A)} = \|Ax(t)\|_{X} + \|x(t)\|_{X}$, we deduce
	\begin{align*}
		\|e^{\beta t} x(t)\|_{\domain(A)} &\leq a \|x(0)\|_{\domain(A)} + \frac{1 + \alpha - \beta}{\alpha - \beta} a \|B\| \max_{j \in \{0,1\}} \|e^{\beta s} \partial_s^j u(s)\|_{L^{\infty}([0,t])}.
	\end{align*}
	Combined with $\|y(t)\| \leq \|C\|_{\domain(A) \rightarrow Y} \|x(t)\|_{\domain(A)}$, we obtain desired bound on the IO-$(1,0)$-gain $g(\beta)$.
\end{proof}

In the case $A$ is bounded, $\system(A, B, C)$ has finite IO-$(0,1)$-gains.

\begin{lemma} \label{lem:io_gain_strong}
	If $\|e^{At}\| \leq a e^{-\alpha t}$ for some $a \geq 1$ and $\alpha \in \RR$,
	then the $\beta$-IS-$(0,1)$-gain $h(\beta)$ and the $\beta$-IO-$(0,1)$-gain $g(\beta)$ of $S = \system(A, B, C)$ satisfy 
	\begin{align*}
		h(\beta) &\leq \max\left(\frac{a\|B\|}{\alpha - \beta},\, \frac{a\|B\| \|A\|}{\alpha - \beta} + \|B\| \right), \\
		g(\beta) &\leq \max\left(\frac{a\|B\|\|C\|}{\alpha - \beta},\, \frac{a\|B\| \|A\| \|C\|}{\alpha - \beta} + \|B\| \|C\| \right)
	\end{align*}
	for all $\beta < \alpha$.
\end{lemma}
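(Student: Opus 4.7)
I would mirror the proof of \Cref{lem:io_gain_weak} but exploit the implicit hypothesis that $A$ is bounded: if $\|A\|=\infty$ then the right-hand sides of the claimed inequalities are infinite and there is nothing to prove, so one may as well assume $A$ is a bounded operator on $X$ and $\domain(A)=X$. Fix $(u,x,y)\in S$; note that since $n=0$ in the definition of the $\beta$-IO-$(0,1)$-gain, no differentiability of $u$ is assumed beyond $u\in C^{0}_{\local}$.

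For the $k=0$ component of the IS estimate I would reuse verbatim the Duhamel-based computation from \Cref{lem:io_gain_weak}: applying the variation-of-constants formula $x(t)=e^{At}x(0)+\int_0^t e^{A(t-s)}Bu(s)\,ds$, the bound $\|e^{As}\|\leq a e^{-\alpha s}$, and $\beta<\alpha$ yields
\[
\|e^{\beta t}x(t)\|\;\leq\; a\|x(0)\| + \frac{a\|B\|}{\alpha-\beta}\,\|e^{\beta s}u(s)\|_{L^{\infty}([0,t])}.
\]
For the $k=1$ component, rather than differentiating the integral (which in \Cref{lem:io_gain_weak} forced the appearance of $\dot u$), I would use directly the state equation $\dot x = Ax+Bu$ together with boundedness of $A$:
\[
\|e^{\beta t}\dot x(t)\| \;\leq\; \|A\|\,\|e^{\beta t}x(t)\| + \|B\|\,\|e^{\beta t}u(t)\|,
\]
and substitute the $k=0$ bound on the right-hand side to recover exactly the second entry in the stated max. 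Taking the maximum of the two $k$-estimates and absorbing the $x(0)$-dependent terms into the slack functional $b(u,x,y)$ (which is unconstrained by \Cref{def:io_gain}) delivers the bound on $h(\beta)$.

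For $g(\beta)$ I would simply observe that boundedness of $A$ makes $C$ bounded on all of $X$, so $\|y(t)\|\leq\|C\|\,\|x(t)\|$ and, differentiating $y=Cx$, $\|\dot y(t)\|\leq\|C\|\,\|\dot x(t)\|$; multiplying the two IS estimates by $\|C\|$ produces the claimed IO bound. There is no genuine obstacle in this argument; the only conceptual point worth highlighting is that boundedness of $A$ is precisely what removes the need to trade one order of regularity in $u$ for one order of regularity in $x$, which in \Cref{lem:io_gain_weak} was the reason only the IO-$(1,0)$-gain could be controlled. Here we gain an order on the output side at no cost in the input side.
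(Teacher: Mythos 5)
Your proposal is correct and follows essentially the same route as the paper's (very terse) proof: bound the $\beta$-IS-$(0,0)$-gain by $a\|B\|/(\alpha-\beta)$ via the variation-of-constants formula, obtain the derivative estimate directly from $\dot{x}=Ax+Bu$ using boundedness of $A$, and multiply by $\|C\|$ for the output bounds. Your added remarks on why boundedness of $A$ removes the need for regularity of $u$ are consistent with the paper's framing and fill in details the paper leaves implicit.
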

\begin{proof}
	The $\beta$-IS-$(0,0)$-gain of $S$ is bounded by $a\|B\|/(\alpha - \beta)$. 
	The bound on $h(\beta)$ follows from the equation $\dot{x} = Ax + Bu$.
\end{proof}

In view of the results above, exponential stability of the closed-loop system $\sysclose(S \circ T)$ in \Cref{fig:closedloop_diagram} can be established using the following Proposition.

\begin{proposition} \label{pro:smallgain} \stepcounter{refer}
	Given a $(U, X, Y)$-relation $S = \system(A, B, C)$ and a finite-dimensional $(Y, W, U)$-relation $T = \system(E, F, G)$.
	Assume that 
	\begin{itemize} \itemsep 0mm
		\item $S$ has finite IS-$(1,1)$-gain and $T$ has finite IS-$(0,1)$-gains;
		\item The product of the IO-$(1,0)$-gain of $S$ and the IO-$(0,1)$-gain of $T$ is strictly less than $1$.
	\end{itemize}
	Then $\sysclose(T \circ S)$ is exponentially stable.
\end{proposition}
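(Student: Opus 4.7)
The plan is to bootstrap the two IO-gain estimates into a fixed-point-style bound on the internal signals, then use the IS-gains to conclude. Fix any $(0,(x,w),0)\in\sysclose(T\circ S)$; by definition of the closed loop and of serial composition, there exist $u\in C^0_{\local}(\RRge{0},U)$ and $y\in C^0_{\local}(\RRge{0},Y)$ with $(u,x,y)\in S$ and $(y,w,u)\in T$. The finite IO-$(0,1)$-gain of $T$ automatically promotes $u$ to $C^1_{\local}$, which is precisely the regularity required to invoke $S$'s IO-$(1,0)$-gain bound.

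Let $g_S(\beta)$ denote the $\beta$-IO-$(1,0)$-gain of $S$ and $g_T(\beta)$ the $\beta$-IO-$(0,1)$-gain of $T$. Both are non-decreasing in $\beta$, and their limits as $\beta\to 0^+$ equal the respective IO-gains, whose product is strictly less than $1$ by hypothesis. I therefore pick $\beta>0$ small enough that $g_S(\beta)g_T(\beta)<1$. Writing
\begin{align*}
U(t) &:= \max_{j\in\{0,1\}}\|e^{\beta s}\partial_s^j u(s)\|_{L^\infty([0,t])}, \\
Y(t) &:= \|e^{\beta s}y(s)\|_{L^\infty([0,t])},
\end{align*}
the two gain inequalities, after taking essential suprema over $[0,t]$ on the left-hand sides (legitimate because their right-hand sides are non-decreasing in $t$), read
\begin{align*}
Y(t) &\le g_S(\beta)\,U(t)+b_S, \\
U(t) &\le g_T(\beta)\,Y(t)+b_T,
\end{align*}
for constants $b_S,b_T\ge 0$ depending only on $(u,x,y)$ and $(y,w,u)$. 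Eliminating $Y$ gives $U(t)\le g_S(\beta)g_T(\beta)\,U(t)+g_T(\beta)b_S+b_T$, and since $g_S(\beta)g_T(\beta)<1$ this bounds $U(t)$—hence also $Y(t)$—uniformly in $t\ge 0$.

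To finish, I apply the finite $\beta$-IS-$(1,1)$-gain of $S$ to bound $\|e^{\beta t}x(t)\|$ by $U(t)$ plus a constant, and the finite $\beta$-IS-$(0,1)$-gain of $T$ to bound $\|e^{\beta t}w(t)\|$ by $Y(t)$ plus a constant. Both bounds are uniform in $t$, so $e^{\beta t}(\|x(t)\|+\|w(t)\|)\in L^\infty(\RRge{0})$, which gives $\beta$-exponential stability of $\sysclose(T\circ S)$.

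The main obstacle I expect is the regularity bootstrap in the first paragraph: the apparently mismatched $(1,0)$/$(0,1)$ indices are compatible only because $T$ is finite-dimensional with a bounded generator, so its output operator is smoothing one time-derivative along trajectories; this is what makes the composition of the two gain estimates dimensionally consistent. A secondary technical point is checking that the $b$-terms propagated through the argument depend linearly on the initial data $(x(0),w(0))$—which is the case in the constructions of \Cref{lem:io_gain_weak} and \Cref{lem:io_gain_strong}—so that $L^\infty$-boundedness of $e^{\beta t}\|(x,w)\|$ actually upgrades to the uniform bound $\|x(t)\|+\|w(t)\|\lesssim e^{-\beta t}(\|x(0)\|+\|w(0)\|)$ demanded by the definition of exponential stability.
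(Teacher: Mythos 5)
Your small-gain bootstrap is essentially the paper's first half (the paper bounds the $\beta$-IO-$(1,1)$-gain of $T\circ S$ by the product of the two gains and runs the same fixed-point inequality; your elimination between the two inequalities is equivalent), and the regularity promotion of $u$ to $C^1_{\local}$ via the $(0,1)$-gain of $T$ is correct. The gap is in the concluding step. The paper's definition of $\beta$-exponential stability demands a single constant $a$ with $\|x(t)\|+\|w(t)\|\leq a\,e^{-\beta t}\bigl(\|x(0)\|+\|w(0)\|\bigr)$ uniformly over \emph{all} closed-loop trajectories, whereas your argument only yields, for each fixed trajectory, $e^{\beta t}\bigl(\|x(t)\|+\|w(t)\|\bigr)\leq M$ with $M$ depending on the trajectory through the offsets $b_S,b_T$. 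In the definition of finite gain these offsets are arbitrary functions $b:S\rightarrow\RRge{0}$; nothing in the hypotheses of the proposition says they scale linearly with the initial state. Your proposed repair --- that the $b$-terms are linear in $(x(0),w(0))$ ``as in \Cref{lem:io_gain_weak} and \Cref{lem:io_gain_strong}'' --- imports structure that is not among the stated assumptions (the proposition assumes only abstract finiteness of the gains), so as written the proof does not close.

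The paper bridges exactly this gap by a different mechanism: since $T$ is finite-dimensional, \Cref{pro:closedloop_semigroup} applies and gives $\sysclose(T\circ S)\cong\system(H,0,0)$ for a semigroup generator $H$ on $X\times W$; the per-trajectory bounds on the states \emph{and their derivatives} (this is why the hypotheses ask for IS-$(1,1)$ and IS-$(0,1)$ gains, i.e.\ with $m=1$, and your proof never uses the derivative part) give $\|e^{Ht}z\|_{\domain(H)}\lesssim e^{-\beta t}$ for every $z\in\domain(H)$, and a separate lemma based on the uniform boundedness principle then upgrades this pointwise-in-$z$ decay to the uniform operator bound $\|e^{Ht}\|\lesssim e^{-\beta t}$, which is the required exponential stability with constant independent of the trajectory and linear in the initial data. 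To fix your proof, replace the final sentence by this semigroup-plus-uniform-boundedness argument (or add the linearity of the $b$-terms as an explicit hypothesis, which would weaken the statement relative to the paper's).
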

\begin{proof}
	By assumption, there exists $\beta > 0$ such that the $\beta$-IO-$(1,1)$-gain $g(\beta)$ of $T \circ S$ is less than $1$.
	Therefore, for every behaviour $(u, (x,w), u) \in T \circ S$, there is a number $b > 0$ such that
	\begin{align*}
		\sup_{s \in [0, t]} \max(e^{\beta t} u(s),\, e^{\beta t} \dot{u}(s)) &\leq g(\beta) \sup_{s \in [0, t]} \max(e^{\beta t} u(s),\, e^{\beta t} \dot{u}(s)) + b \qquad \forall t \geq 0.
	\end{align*}
	This implies $e^{\beta t} u \in L^{\infty}$ and $e^{\beta t} \dot{u} \in L^{\infty}$.
	In view of hypotheses on the IS-gains, $e^{\beta t} x \in L^{\infty}$, $e^{\beta t} \dot{x} \in L^{\infty}$, $e^{\beta t} w \in L^{\infty}$, $e^{\beta t} \dot{w} \in L^{\infty}$.
	
	By \Cref{pro:closedloop_semigroup}, $\sysclose(T \circ S) \cong \system(H,0,0)$ for some generator $H$ on $X \times W$.
	The previous analysis implies $\|e^{Ht} x\|_{\domain(H)} \lesssim e^{\beta t}$ for every $x \in \domain(H)$.
	By the lemma below, we conclude that $\system(H,0,0)$ is exponentially stable.
\end{proof}

\begin{lemma}
	Let $A$ be a semigroup generator on a Banach space $X$.
	Assume that for every $x \in \domain(A)$, $\|e^{At}x\|_{\domain(A)} \lesssim e^{\alpha t}$ uniform in $t \geq 0$, then $\|e^{At}\| \lesssim e^{\alpha t}$ uniform in $t \geq 0$.
\end{lemma}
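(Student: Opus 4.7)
The approach combines the Banach--Steinhaus theorem with a similarity transformation via the resolvent. Since $A$ is closed, $(\domain(A),\|\cdot\|_{\domain(A)})$ is a Banach space, and since $e^{At}$ commutes with $A$ it restricts to a bounded operator on this space: for $x\in\domain(A)$ one has $Ae^{At}x=e^{At}Ax$, so $\|e^{At}x\|_{\domain(A)}\leq \|e^{At}\|_{X\to X}\,\|x\|_{\domain(A)}$. The hypothesis is then precisely that the family $\{e^{-\alpha t}e^{At}\}_{t\geq 0}$ of bounded operators on $\domain(A)$ is pointwise bounded, so Banach--Steinhaus yields a constant $M$ with $\|e^{At}\|_{\domain(A)\to\domain(A)}\leq M e^{\alpha t}$ for every $t\geq 0$.

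To transfer this bound back to $X$, pick any $\lambda\in\rho(A)$ (nonempty because $A$ generates a $C_0$-semigroup) and let $R=(\lambda-A)^{-1}$. Then $R$ is a topological isomorphism from $X$ onto $(\domain(A),\|\cdot\|_{\domain(A)})$: its inverse $\lambda-A$ is bounded by $|\lambda|+1$ as a map $(\domain(A),\|\cdot\|_{\domain(A)})\to X$, and $R$ is bounded in the other direction since $AR=\lambda R-I$ gives $\|Rx\|_{\domain(A)}=\|Rx\|+\|ARx\|\lesssim\|x\|$. Because $R$ commutes with $e^{At}$, for every $x\in X$,
\begin{equation*}
\|e^{At}x\|_{X}=\|(\lambda-A)\,e^{At}Rx\|_{X}\leq(|\lambda|+1)\|e^{At}Rx\|_{\domain(A)}\leq(|\lambda|+1)\,M e^{\alpha t}\|Rx\|_{\domain(A)}\lesssim e^{\alpha t}\|x\|_X,
\end{equation*}
which delivers the desired uniform bound $\|e^{At}\|\lesssim e^{\alpha t}$.

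The argument is really just Banach--Steinhaus dressed up; the only mildly delicate observation is that the hypothesis is naturally stated on $\domain(A)$ equipped with the graph norm, and that any resolvent of $A$ supplies a topological isomorphism conjugating the semigroup on $X$ to its restriction on $\domain(A)$, so uniform exponential bounds may be passed back and forth between the two actions. Checking that $(\domain(A),\|\cdot\|_{\domain(A)})$ is indeed complete (closedness of $A$) and that $e^{At}$ really is bounded on it are routine bookkeeping items that take up the only remaining work.
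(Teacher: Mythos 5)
Your proof is correct and follows essentially the same route as the paper: apply the uniform boundedness principle to the restricted semigroup on $(\domain(A),\|\cdot\|_{\domain(A)})$, then transfer the exponential bound back to $X$ by noting that $\lambda-A$ (equivalently its inverse $R$) is a topological isomorphism between $\domain(A)$ and $X$ that intertwines the two semigroup actions. No substantive differences to report.
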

\begin{proof}
	Let $e^{A_1 t} : \domain(A) \rightarrow \domain(A)$ represents the restriction of the semigroup $e^{At}$ to $\domain(A)$. 
	By assumption, the family of bounded operators $\{e^{-\alpha t} e^{A_1 t} \mid t \geq 0\}$ satisfies the pointwise bound $\sup_{t \geq 0} \|e^{-\alpha t} e^{A_1 t} x\| < \infty$.
	By the uniform boundedness principle, $\sup_{t \geq 0} \|e^{-\alpha t} e^{A_1 t}\|_{\domain(A) \rightarrow \domain(A)} < \infty$.
	Fix any $\lambda \in \CC \setminus \spec(A)$, the operator $A - \lambda$ is an isomorphism from $\domain(A)$ to $X$. 
	By the commutation diagram
	\begin{equation*} \begin{tikzcd}
		\domain(A) \arrow[r, "e^{A_1 t}"] \arrow[d, hook, two heads, "A - \lambda"] & \domain(A) \arrow[d, hook, two heads, "A - \lambda"] \\
		X \arrow[r, "e^{At}"] & X
	\end{tikzcd} \end{equation*}
	we conclude that $\|e^{At}\|_{X \rightarrow X} \lesssim \|e^{A_1 t}\|_{\domain(A) \rightarrow \domain(A)} \lesssim e^{\alpha t}$.
\end{proof}

\subsection{A general condition for feedback stabilization} \label{sect:control_design}

To ensure exponential stability of the closed-loop system in \Cref{fig:closedloop_diagram}, we will apply \Cref{pro:smallgain} to $R$ and $S_s$. 
By construction, both systems are exponentially stable, hence have finite IS-gains.
It remains to ensure that the IO-gain of $R \circ S_s$ is sufficiently small.
Since $R$ is composed of $S_u$, which always contain the unstable modes of $S$, we cannot expect to reduce the IO-gain of $R$ arbitrarily.
The only obvious alternative is to reduce the IO-gain of $S_s$ by choosing the decomposition $S \cong S_u + S_s$.
This motivates the following definition.

\begin{definition} \label{def:quasi_finite}
	A system $S = \system(A, B, C)$ is \define{quasi-finite} if for every $\epsilon > 0$, there is a decomposition $S \cong S_u + S_s$ for a finite-dimensional $S_u = \system(A_u, B_u, C_u)$ and an exponentially stable $S_s = \system(A_s, B_s, C_s)$ such that the IO-$(1,0)$-gain of $\system(A_s, B_s, C_s)$ is less than $\epsilon$.
\end{definition}

The quasi-finite assumption turns out to be sufficient for solving \Cref{prob:control}.

\begin{theorem} \label{thm:control} \stepcounter{refer}
	Given any stabilizable, detectable, quasi-finite $(U, X, Y)$-relation $S = \system(A, B, C)$, there exists a finite-dimensional system $T = \system(E, F, G)$ such that $\sysclose(T \circ S)$ is exponentially stable.
\end{theorem}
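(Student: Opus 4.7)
The plan is to implement the design strategy sketched by the authors prior to the theorem statement: decompose $S$ by quasi-finiteness, design a Luenberger-type controller for the finite-dimensional unstable part, and close the outer loop via the small-gain result \Cref{pro:smallgain}.

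First, fix a parameter $\epsilon > 0$ to be chosen at the end. By \Cref{def:quasi_finite}, write $S \cong S_u + S_s$ with $S_u = \system(A_u, B_u, C_u)$ finite-dimensional, $S_s = \system(A_s, B_s, C_s)$ exponentially stable, and such that the IO-$(1,0)$-gain of $S_s$ is strictly less than $\epsilon$. By \Cref{lem:stabilizability}, $S_u$ inherits stabilizability and detectability from $S$, so there exist matrices $K$ and $L$ with $A_u + B_u K$ and $A_u + LC_u$ Hurwitz. Set $T := \system(A_u + LC_u + B_u K,\, -L,\, K)$. A direct change of variables $e = w - x_u$ shows the dynamics of $\sysclose(T \circ S_u)$ become upper triangular with Hurwitz diagonal blocks $A_u + B_u K$ and $A_u + LC_u$, so $\sysclose(T \circ S_u)$ is exponentially stable.

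Next, let $R$ denote the finite-dimensional system formed by interconnecting $T$ and $S_u$ as in \Cref{fig:closedloop_diagram}: it has input $y_s$, state $(x_u, w)$, and output $u = Kw$. Writing out the defining equations of both sides, one checks the isomorphism of relations $\sysclose(T \circ S) \cong \sysclose(R \circ S_s)$. The autonomous dynamics of $R$ coincide with those of $\sysclose(T \circ S_u)$, so $R$ is finite-dimensional and exponentially stable with all operators bounded. Consequently \Cref{lem:io_gain_strong} yields finite IS-$(0,1)$- and IO-$(0,1)$-gains for $R$; crucially, these bounds depend only on $(A_u, B_u, C_u)$ and on the fixed choice of $K$ and $L$, not on $\epsilon$. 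Denote by $\gamma_R$ the resulting IO-$(0,1)$-gain of $R$. Since $S_s$ is exponentially stable, \Cref{lem:io_gain_weak} provides a finite IS-$(1,1)$-gain for $S_s$. Choosing $\epsilon < 1/\gamma_R$ makes the product of the relevant IO-gains strictly less than $1$, and the hypotheses of \Cref{pro:smallgain} (applied with $S_s$ in the role of $S$ and $R$ in the role of $T$) are met. Therefore $\sysclose(R \circ S_s)$, and thus $\sysclose(T \circ S)$, is exponentially stable.

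The main obstacle I anticipate is the bookkeeping around two points. First, one must verify that the IO-$(0,1)$-gain bound on $R$ is genuinely independent of the decomposition $S \cong S_u + S_s$, so that $\epsilon$ can be shrunk afterwards without needing to redesign $(K, L)$; this is what allows the argument to close. Second, the isomorphism $\sysclose(T \circ S) \cong \sysclose(R \circ S_s)$ must be justified within the rigorous input-state-output relation formalism of \Cref{sect:inputoutput_relation} rather than by block-diagram rewriting, which requires matching trajectories componentwise and confirming that the regularity classes used in the definitions of IS- and IO-$(n,m)$-gains are preserved under the identification. Everything else is either a standard finite-dimensional observer computation or a direct invocation of the lemmas already established.
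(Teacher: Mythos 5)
Your overall architecture (quasi-finite decomposition, observer-based controller for the finite-dimensional part, closure via \Cref{pro:smallgain}) is the same as the paper's, but there is a genuine gap at exactly the point you dismiss as bookkeeping. Your key claim that the IO-$(0,1)$-gain $\gamma_R$ of $R$ ``depends only on $(A_u, B_u, C_u)$ and on the fixed choice of $K$ and $L$, not on $\epsilon$'' does not close the argument, because $(A_u, B_u, C_u)$ themselves depend on $\epsilon$: the decomposition $S \cong S_u + S_s$ furnished by \Cref{def:quasi_finite} changes as $\epsilon$ shrinks (typically $X_u$ must absorb more and more of the stable modes to make $\|P_s B\|$, and hence the gain of $S_s$, small). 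Consequently $S_u$, the stabilizing pair $(K, L)$ you design for it, and therefore $\gamma_R$ are all functions of $\epsilon$, and the final step ``choose $\epsilon < 1/\gamma_R$'' is circular: $\gamma_R$ is only defined after $\epsilon$ (hence the decomposition) has been fixed, and with an arbitrary stabilizing pair $(K, L)$ for the full $S_u$ there is no a priori bound on $\gamma_R$ that is uniform over the family of decompositions.

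The paper's proof contains precisely the extra idea needed to break this circularity. Using \Cref{pro:finite_unstable2}, every admissible $S_u$ splits further as $S_{\bar{u}} + S_r$, where $S_{\bar{u}}$ is built on the canonical unstable subspace $X_{\bar{u}}$ (independent of the decomposition) and only the exponentially stable residual $S_r$ varies with $\epsilon$. The controller is then the observer that copies the full $S_{\bar{u}} + S_r$ dynamics but with output injection $\begin{bmatrix} L_{\bar{u}}^\top & 0 \end{bmatrix}^\top$ and feedback $\begin{bmatrix} K_{\bar{u}} & 0 \end{bmatrix}$ acting only on the $\bar{u}$-component; in the coordinates $(x_{\bar{u}}, x_r, e_{\bar{u}}, e_r)$ the $r$-states neither feed the output nor are excited through the injection path, so the gain of the map $y_s \mapsto u$ equals that of a system assembled from $(A_{\bar{u}}, B_{\bar{u}}, C_{\bar{u}}, K_{\bar{u}}, L_{\bar{u}})$ alone. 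This makes the quantifier order legitimate: fix $K_{\bar{u}}, L_{\bar{u}}$ and the resulting bound $b$ first, then invoke quasi-finiteness with $\epsilon < 1/b$, then assemble $T$ (whose dimension does grow with the decomposition, which is harmless). To repair your proof you must either adopt this structured design (your $T = \system(A_u + LC_u + B_uK, -L, K)$ reduces to it if you insist on $L$ and $K$ supported on the $\bar{u}$-block) or supply some other argument giving a decomposition-uniform bound on $\gamma_R$; as written, that step fails.
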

\begin{proof}
	For every subscript $k$, the operators $(A_k, B_k, C_k)$ will denote the unique operators such that $S_k = \system(A_k, B_k, C_k)$.
	Let $X_{\bar{u}} \oplus X_{\bar{s}}$ be the canonical stable/unstable decomposition of the state space $X$ defined in \Cref{pro:finite_unstable2}.
	In this coordinate system, $S$ writes $S_{\bar{u}} + S_{\bar{s}}$.
	By \Cref{pro:finite_unstable2} (e), for every decomposition $S \cong S_u + S_s$ with an exponentially stable system $S_s$, the subsystem $S_u$ further decomposes into $S_{\bar{u}} + S_r$.
	 %there exists $S_{\bar{u}}$ such that every decomposition $S \cong S_{\bar{u}} + S_{\bar{s}}$ further decomposes into $S_u + S_r + S_p$, where $S_r$ is some exponentially stable system such that $S_{\bar{s}} \cong S_p+ S_r$.
	Using the decomposition $S = S_{\bar{u}} + S_r + S_s$, the closed-loop $\sysclose(T \circ S)$ expands into \Creflocal{fig:1}.
	\begin{figure}[h]
	\centering \begin{tikzpicture}
		\tikzstyle{sys}=[rectangle, draw, minimum width=10mm, minimum height=8mm];
		\node[sys] at (0,0) (S_r) {$S_r$};
		\node[sys] at (0,1.2) (S_u) {$S_{\bar{u}}$};
		\node[sys] at (0,-1.2) (S_s) {$S_s$};
		\node[sys] at (0,2.4) (T) {$T$};
		\node[circle, draw, scale=0.7] at (1.5, 0) (out) {$+$};
		\coordinate (in) at (-1.5, 0);
		%\node[circle, draw] at (-1.5, 1.2) (in) {};
		
		\draw[->] (S_u) -|node[above, pos=0.25] {$y_{\bar{u}}$} (out);
		\draw[->] (S_r) --node[above] {$y_r$} (out);
		\draw[->] (S_s) -|node[above, pos=0.25] {$y_s$}  (out);
		\draw[->] (out) -| ($(out) + (0.5, 0)$) |- (T);
		
		\draw[->] (T) -| (in) -- (S_r);
		\draw[->] (in) |- (S_u);
		\draw[->] (in) |- node[above, pos=0.75] {$u$} (S_s);
		
		\draw[dotted, thick] (-2, -0.6) -- (2.5, -0.6) --node[right] {$R$} (2.5, 3) -- (-2, 3) -- cycle;
	\end{tikzpicture}
	\caption{Closed-loop system $\sysclose(T \circ S)$ in \Cref{thm:control}} \labellocal{fig:1}
	\end{figure}
	
	Quasi-finiteness of $S$ means that the IO-$(1,0)$-gain of $\system(A_s, B_s, C_s)$ can be chosen arbitrarily small, though the decomposition will affect $S_r$ as well.
	Assuming that we can show
	\begin{enumerate}[(i)] \itemsep 0mm
		\item\labellocal{item:i} There exists $b > 0$ such that for every $S_r$, we can design $T$ such that the system $R$ in \Creflocal{fig:1}, with input $y_s$ and output $u$, is exponentially stable and has an IO-$(0,1)$-gain less than $b$,
	\end{enumerate}
	then $\sysclose(R \circ S_s)$ is exponentially stable by \Cref{pro:smallgain}.
	
	``Proof of \reflocal{item:i}''	
	Our objective is to stabilize $S_u \cong S_{\bar{u}} + S_r$, while still fulfilling an opportune small-gain property on the interconnection $\sysclose(R \circ S_s)$. 
	In particular, the design of the controller $T$ should not impact the intrinsic stability of $S_r$. 
	To this end, write $S_{\bar{u}}+S_r$ as the ODE
	\begin{align*}
		\partial_t \begin{bmatrix} x_{\bar{u}} \\ x_r \end{bmatrix} &= 
		\begin{bmatrix}
			A_{\bar{u}} & 0\\
			0 & A_r
		\end{bmatrix} 
		\begin{bmatrix} x_{\bar{u}} \\ x_r \end{bmatrix} + \begin{bmatrix} B_{\bar{u}} \\ B_r \end{bmatrix} u, \\
		y_u &= \begin{bmatrix} C_{\bar{u}} & C_r \end{bmatrix} \begin{bmatrix} x_{\bar{u}}\\ x_r \end{bmatrix}
	\end{align*}
	and consider the observer-based controller $T$ defined by
	\begin{align*}
		\partial_t \begin{bmatrix} w_{\bar{u}} \\ w_r \end{bmatrix} &= 
		\left( \begin{bmatrix}
		 A_{\bar{u}} & 0\\
		 0 & A_r
		\end{bmatrix} + \begin{bmatrix} L_{\bar{u}} \\ 0 \end{bmatrix} \begin{bmatrix} C_{\bar{u}} & C_r \end{bmatrix} \right)
		\begin{bmatrix} w_{\bar{u}} \\ w_r \end{bmatrix} - \begin{bmatrix} L_{\bar{u}} \\ 0 \end{bmatrix} (y_u + y_s) + \begin{bmatrix} B_{\bar{u}} \\ B_r \end{bmatrix} u, \\
		u &= \begin{bmatrix} K_{\bar{u}} & 0 \end{bmatrix} \begin{bmatrix} w_{\bar{u}} \\ w_r \end{bmatrix}.
	\end{align*}
	In the state vector $[x_{\bar{u}}^\top, x_r^\top, e_{\bar{u}}^\top, e_r^\top]^\top$, where $e_{\bar{u}}=w_{\bar{u}}-x_{\bar{u}}$ and $e_{r}=w_{r}-x_{r}$, the dynamics of system $R$ reads as
	\begin{align*}
		\partial_t \begin{bmatrix} x_{\bar{u}} \\ x_r \\ e_{\bar{u}} \\ e_r \end{bmatrix} &= 
		\setlength\arraycolsep{2pt}
		\begin{bmatrix} A_{\bar{u}} + B_{\bar{u}} K_{\bar{u}} & 0 & B_{\bar{u}} K_{\bar{u}} & 0 \\ B_rK_{\bar{u}} & A_r & B_rK_{\bar{u}} & 0 \\ 0 & 0 & A_{\bar{u}} + L_{\bar{u}} C_{\bar{u}} & L_{\bar{u}}C_r \\ 0 & 0 & 0 & A_r \end{bmatrix}
		\begin{bmatrix} x_{\bar{u}} \\ x_r \\ e_{\bar{u}} \\ e_r \end{bmatrix} - 
		\begin{bmatrix} 0 \\ 0 \\ L_{\bar{u}} \\ 0 \end{bmatrix} y_s, \\
		u &= \begin{bmatrix} K_{\bar{u}} & 0 & K_{\bar{u}} & 0 \end{bmatrix} 
		\begin{bmatrix} x_{\bar{u}} \\ x_r \\ e_{\bar{u}} \\ e_r \end{bmatrix}.
	\end{align*}
	Since $x_r$ does not affect the output and $e_r$ is autonomous with $A_r$ Hurwitz, the IO-$(1,0)$-gain of $R$, with input $y_s$ and output $u$, is equal to the IO-$(1,0)$-gain of
	\begin{align*}
		\system\left( \begin{bmatrix} A_{\bar{u}} + B_{\bar{u}} K_{\bar{u}} & B_{\bar{u}} K_{\bar{u}} \\ 0 & A_{\bar{u}} + L_{\bar{u}} C_{\bar{u}} \end{bmatrix}, \begin{bmatrix} 0 \\ -L_{\bar{u}} \end{bmatrix}, \begin{bmatrix} K_{\bar{u}} & K_{\bar{u}} \end{bmatrix} \right).
	\end{align*}
	By selecting $L_{\bar{u}}$ and $K_{\bar{u}}$ such that $A_{\bar{u}} + L_{\bar{u}} C_{\bar{u}}$ and $A_{\bar{u}} + B_{\bar{u}} K_{\bar{u}}$ are Hurwitz, the system $R$ is exponentially stable and has a finite IO-$(1,0)$-gain is independent of $S_r$.
\end{proof}

\subsection{Quasi-finiteness} \label{sect:quasifinite}

In this section, we present some sufficient conditions for quasi-finiteness, the property appearing in the hypothesis of \Cref{thm:control}. 
\Cref{thm:control_schauderbasis} will be proved as a corollary of \Cref{thm:control}.

Assume $S \cong S_u + S_s$ is a decomposition into finite-dimensional and exponentially stable systems. 
By \Cref{lem:io_gain_weak}, the IO-$(1,0)$-gain of $S_s = \system(A_s, B_s, C_s)$ is proportional to $\|B_s\|$.
Quasi-finiteness can therefore be recovered by an assumption on the norm $\|B_s\|$.

\begin{proposition} \label{pro:quasifinite_1}
	Consider a $(U, X, Y)$-relation $S = \system(A, B, C)$ with finite unstable part.
	Assuming that for every $\epsilon > 0$, there exists a decomposition of $X$ into closed $A$-invariant subspaces $X_u \oplus X_s$ such that $\dim X_u < \infty$ and $\| P_s B \| < \epsilon$ for the projection $P_s$ onto $X_s$, then $S$ is quasi-finite.
\end{proposition}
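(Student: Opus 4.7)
The plan is to reduce to the canonical stable/unstable decomposition from Proposition \ref{pro:finite_unstable2}, then use the hypothesis together with Lemma \ref{lem:io_gain_weak} to control the IO-$(1,0)$-gain of the stable part. Let $X = X_{\bar u} \oplus X_{\bar s}$ be the canonical decomposition with projection $P_{\bar s}$ onto $X_{\bar s}$, and fix constants $a \geq 1$, $\alpha > 0$ with $\|e^{A_{\bar s} t}\| \leq a e^{-\alpha t}$. These constants depend only on $S$.

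Given a target bound $\epsilon' > 0$ on the IO-$(1,0)$-gain of the stable part, I would apply the hypothesis with some $\epsilon > 0$ (to be fixed later) to obtain a decomposition $X = X_u \oplus X_s$ into closed $A$-invariant subspaces with $\dim X_u < \infty$ and $\|P_s B\| < \epsilon$. This decomposition need not satisfy $X_{\bar u} \subseteq X_u$, which by parts (b) and (c) of Proposition \ref{pro:finite_unstable2} is required for the restriction of $e^{At}$ to $X_s$ to be exponentially decaying. I would therefore refine the decomposition by setting $X_u' := X_u + X_{\bar u}$ and $X_s' := X_s \cap X_{\bar s}$. Using parts (b) and (d) of Proposition \ref{pro:finite_unstable2}, these are closed $A$-invariant complementary subspaces with $\dim X_u' < \infty$, and a direct calculation using the decomposition of $X_s$ along $X_{\bar u} \oplus X_{\bar s}$ yields $P_{s'} = P_{\bar s} P_s$, so $\|P_{s'} B\| \leq \|P_{\bar s}\| \cdot \epsilon$.

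By Proposition \ref{pro:finite_unstable}, this refined decomposition corresponds to a system decomposition $S \cong S_{u'} + S_{s'}$ with $S_{s'} = \system(A_{s'}, B_{s'}, C_{s'})$, $B_{s'} = P_{s'} B$, and $C_{s'}$ the restriction of $C$ to $\domain(A_{s'})$. Since $X_{s'} \subseteq X_{\bar s}$, the semigroup $e^{A_{s'} t}$ is a restriction of $e^{A_{\bar s} t}$, so it obeys $\|e^{A_{s'} t}\| \leq a e^{-\alpha t}$ with the same constants; likewise $\|C_{s'}\|_{\domain(A_{s'}) \to Y} \leq \|C\|_{\domain(A) \to Y}$. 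Applying Lemma \ref{lem:io_gain_weak} and sending $\beta \to 0^+$, the IO-$(1,0)$-gain of $S_{s'}$ is bounded by $a \|P_{\bar s}\| \cdot \|C\|_{\domain(A) \to Y} \cdot \frac{1+\alpha}{\alpha} \cdot \epsilon$, where the factor multiplying $\epsilon$ depends only on $S$. Choosing $\epsilon$ small enough makes this less than $\epsilon'$, establishing quasi-finiteness.

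The main subtlety is the uniformity of the IO-gain bound across the varying decompositions produced by the hypothesis. This is handled by the refinement step, which forces $X_{s'} \subseteq X_{\bar s}$ so that the semigroup-decay constants are inherited from the canonical decomposition, and by the fact that $C_{s'}$ is a mere restriction of $C$. Only $\|B_{s'}\|$ actually varies with $\epsilon$, and that is precisely the quantity controlled by the hypothesis.
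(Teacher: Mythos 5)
Your proof is correct, and it follows the paper's overall strategy --- peel off the canonical unstable part and show that the IO-$(1,0)$-gain of the stable subsystem is bounded by a constant depending only on $S$ times $\|P_s B\|$ --- but executes the two key steps differently. The paper first reduces, wlog, to the case where $S$ itself is exponentially stable (by applying the result to $S_{\bar s}$), so the hypothesized decomposition automatically has an exponentially stable tail, and then obtains the uniform constant compositionally: it factors $S_s$ as the static system $B_s$ followed by $\system(A_s, I, C_s)$ and bounds the gain of the latter by that of the fixed system $\system(A, I, C)$, since trajectories of the restricted system are trajectories of the full one. You instead keep $S$ as given, refine the hypothesized decomposition to $X_u' = X_u + X_{\bar u}$, $X_s' = X_s \cap X_{\bar s}$ so that the stable factor inherits the decay constants of $e^{A_{\bar s}t}$, and extract the uniform constant from the explicit estimate of \Cref{lem:io_gain_weak} together with $\|C_{s'}\|_{\domain(A_{s'}) \to Y} \leq \|C\|_{\domain(A) \to Y}$ and $\|P_{s'}B\| \leq \|P_{\bar s}\|\,\|P_s B\|$. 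Your route is more quantitative and makes explicit the transfer of the hypothesis across the canonical decomposition, which the paper's wlog step leaves implicit (it requires exactly your intersection argument); the paper's route avoids the refinement bookkeeping. One small citation slip: part (d) of \Cref{pro:finite_unstable2} is stated for decompositions whose stable factor is exponentially decaying, which the hypothesized $X_u \oplus X_s$ need not be; but everything you use --- the four-fold splitting, closedness, invariance, complementarity, $\dim X_u' < \infty$, and $P_{s'} = P_{\bar s} P_s$ --- follows from part (b) alone, so this does not affect correctness.
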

\begin{proof}
	We may assume wlog that $S$ is exponentially stable.
	If not, write $S \cong S_{\bar{u}} + S_{\bar{s}}$ using \Cref{pro:finite_unstable2} and apply this result to $S_{\bar{s}}$.

	In coordinates $X_u \oplus X_s$, write
	\begin{align*}
		A &= \begin{bmatrix} A_u & 0 \\ 0 & A_s \end{bmatrix},
		\quad
		B = \begin{bmatrix} B_u \\ B_s \end{bmatrix}, 
		\quad
		C = \begin{bmatrix} C_u & C_s \end{bmatrix}.
	\end{align*}
	Then $S \cong S_u + S_s$ for the $(U, X_u, Y)$-relation $S_u = \system(A_u, B_u, C_u)$ and the $(U, X_s, Y)$-relation $S_s = \system(A_s, B_s, C_s)$.
	The IO-$(1,0)$-gain of $S_s$ is bounded by the product of the IO-$(1,0)$-gain of the $(X_s, X_s, Y)$-relation $\system(A_s, I, C_s)$ and the IO-$(1,1)$-gain of the $(U, 0, X_s)$-relation $B_s$.
	The IO-$(1,0)$-gain of $\system(A_s, I, C_s)$ is bounded by the IO-$(1,0)$-gain of $\system(A, I, C)$, a finite number independent of the decomposition $X = X_u \oplus X_s$.
	The IO-$(1,1)$-gain of $B_s$ is equal to $\|P_s B\|$.
	Therefore, the IO-$(1,0)$-gain of $S_s$ can be arbitrarily small.
\end{proof}

The hypothesis of \Cref{pro:quasifinite_1} is satisfied when $X$ has a Schauder subspace-basis adapted to $A$ (recall \Cref{def:schauderbasis}).

\begin{proposition} \label{pro:quasifinite_2}
	Let $S = \system(A, B, C)$ be a $(U, X, Y)$-relation with finite unstable part and a finite-rank operator $B$.
%	If there is a countable collection $\{X_j\}_{j \in \NN}$ of finite-dimensional $A$-invariant subspaces of $X = \domain(e^A)$ such that every $x \in X$ equals to $\sum_{j \in \NN} x_j$ for a unique sequence $x_j \in X_j$, 
	If $X$ has a Schauder subspace-basis adapted to $A$, then $S$ is quasi-finite.
\end{proposition}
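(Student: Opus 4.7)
The plan is to invoke Proposition \ref{pro:quasifinite_1}: it suffices to show that for every $\epsilon > 0$ we can write $X = X_u \oplus X_s$ as a direct sum of closed $A$-invariant subspaces with $\dim X_u < \infty$ and $\|P_s B\| < \epsilon$, where $P_s$ is the associated projection onto $X_s$.

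Let $\{X_j\}_{j\in\NN}$ be the Schauder subspace-basis adapted to $A$. For each $N \in \NN$, set $X_u^{(N)} = X_0 \oplus \cdots \oplus X_N$ and $X_s^{(N)} = \overline{\bigoplus_{j > N} X_j}$, so that $X = X_u^{(N)} \oplus X_s^{(N)}$ with $X_u^{(N)}$ finite-dimensional. Both summands are closed and $A$-invariant: each $X_j$ is invariant under every bounded operator $e^{At}$, invariance is preserved by finite sums, and closures of invariant subspaces under a continuous operator remain invariant. Let $P_{\leq N}$ denote the partial-sum projection onto $X_u^{(N)}$ associated with the basis; by definition of a Schauder subspace-basis, $P_{\leq N} x \to x$ for every $x \in X$. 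A standard Banach--Steinhaus argument (identical to the scalar Schauder basis case, applied to the coefficient maps together with completeness of $X$) gives $K := \sup_N \|P_{\leq N}\| < \infty$. The complementary projections $P_s^{(N)} := I - P_{\leq N}$ therefore satisfy $\|P_s^{(N)}\| \leq 1 + K$ and $P_s^{(N)} \to 0$ strongly on $X$.

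It remains to upgrade strong convergence to norm convergence after precomposition with $B$. Because $B$ is finite-rank and hence compact, any uniformly bounded, strongly-null family of operators becomes norm-null when precomposed with $B$: on the compact set $B(\overline{B_U(0,1)})$, pointwise convergence of the equicontinuous family $\{P_s^{(N)}\}$ is uniform. Hence $\|P_s^{(N)} B\| \to 0$ as $N \to \infty$, and choosing $N$ large enough gives the required decomposition. The only delicate point is the uniform boundedness of the partial-sum projections $P_{\leq N}$; once that classical fact is established, the finite-rank hypothesis on $B$ does the rest of the work.
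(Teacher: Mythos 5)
Your proof is correct and follows essentially the same route as the paper: reduce to \Cref{pro:quasifinite_1} using the partial-sum decompositions $X = \bigl(\bigoplus_{j\le N} X_j\bigr) \oplus \overline{\bigoplus_{j>N} X_j}$ of the adapted Schauder subspace-basis, uniform boundedness of the partial-sum projections, and the finite rank of $B$ to force $\|P_s^{(N)} B\| \to 0$ (the paper phrases this as approximating a spanning set $b_1,\dots,b_n$ of $\Ima B$, which is equivalent to your compactness argument). One small remark: the uniform bound $\sup_N \|P_{\le N}\| < \infty$ is exactly the paper's \Cref{lem:schauder_basis}, and the standard proof goes through completeness of the norm $\|x\|_* = \sup_n \|\sum_{j\le n} x_j\|$ plus the open mapping theorem rather than Banach--Steinhaus alone, since the individual projections are not a priori bounded.
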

\begin{proof}
	Let $\{X_j\}_{j \in \NN}$ be a Schauder subspace-basis adapted to $A$.
	By \Cref{lem:schauder_basis} below, for every finite or cofinite $N \subseteq \NN$, $\bigoplus_{j \in N} X_j$ is a closed $A$-invariant subspace of $X$.
	$\Ima B$ is spanned by a finite number of vectors $b_1, \ldots, b_n$. 
	Apply \Cref{pro:quasifinite_1} with $X_u = \bigoplus_{j \in N} X_j$ and $X_s = \bigoplus_{j \not\in N} X_j$, where $N$ is large enough such that every $b_j$ is well-approximated by its projection on $X_u$.
\end{proof}

\begin{lemma} \label{lem:schauder_basis}
	Let $\{X_j\}_{j \in \NN}$ be a Schauder subspace-basis of a Banach space $X$.
	%Given a Banach space $X$ and a countable collection $\{X_j\}_{j \in \NN}$ of finite-dimensional subspaces such that every $x \in X$ equals to $\sum_{j \in \NN} x_j$ for a unique sequence $x_j \in X_j$.
	The norm $\|x\|_{X}$ is equivalent to 
	\begin{talign*}
		\|x\|_{*} := \left\{\sup_{n\in\NN} \left\|\sum_{j\leq n} x_j\right\|_{X} \mid \;
		\begin{aligned}
			&x = \sum_{j\in\NN} x_j \\
			&x_j \in X_j
		\end{aligned}	
		\right\}.
	\end{talign*}
	In particular, the projection $X \rightarrow \bigoplus_{j \in N} X_j \subseteq X$ is continuous when $N$ a is finite or cofinite subset of $\NN$.
\end{lemma}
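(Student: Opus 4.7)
The strategy is the classical one for showing that the ``basis norm'' is equivalent to the original norm: verify that $(X, \|\cdot\|_*)$ is a Banach space, observe the trivial continuous inclusion into $(X, \|\cdot\|_X)$, and invoke the bounded inverse theorem.

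First I would make the easy direction explicit: by the convergence $\sum_{j \leq n} x_j \to x$ in $X$, we have $\|x\|_X = \lim_n \|\sum_{j\leq n} x_j\|_X \leq \|x\|_*$, so the identity $\iota : (X, \|\cdot\|_*) \to (X, \|\cdot\|_X)$ is a continuous bijection. Note also that the uniqueness of the expansion makes $\|\cdot\|_*$ a bona fide norm (no infimum over representations is needed), and that $\|\cdot\|_*$ satisfies the triangle inequality because $\sum_{j\leq n}$ is linear in the representation.

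The main (and only real) obstacle is completeness of $(X, \|\cdot\|_*)$. Given a $\|\cdot\|_*$-Cauchy sequence $(x^{(k)})_k$ with decompositions $x^{(k)} = \sum_j x_j^{(k)}$, write $P_n x^{(k)} := \sum_{j \leq n} x_j^{(k)}$. The crucial point is the uniform-in-$n$ estimate
\begin{equation*}
    \|P_n x^{(k)} - P_n x^{(\ell)}\|_X \leq \|x^{(k)} - x^{(\ell)}\|_*,
\end{equation*}
which comes straight from the definition of $\|\cdot\|_*$ applied to $x^{(k)} - x^{(\ell)}$. For each fixed $n$, $(P_n x^{(k)})_k$ is therefore Cauchy in $X$; call its limit $S_n$. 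Closedness of $X_n$ gives $S_n - S_{n-1} \in X_n$. A standard three-epsilon argument (using both the uniform closeness $\|S_n - P_n x^{(k)}\|_X \leq \sup_{\ell \geq k} \|x^{(k)} - x^{(\ell)}\|_*$ and the convergence $P_n x^{(k)} \to x^{(k)}$ in $\|\cdot\|_X$ for each fixed $k$) shows that $(S_n)_n$ is Cauchy in $X$; let $x$ be its limit. By uniqueness of the basis expansion, the $S_n - S_{n-1}$ are the basis coefficients of $x$, so $\|x - x^{(k)}\|_* = \sup_n \|S_n - P_n x^{(k)}\|_X$ can be made arbitrarily small by taking $k$ large.

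With completeness in hand, the bounded inverse theorem applied to $\iota$ yields a constant $C$ with $\|x\|_* \leq C \|x\|_X$, completing the equivalence. For the statement about projections, let $P_n$ denote the projection onto $\bigoplus_{j \leq n} X_j$; from $\|P_n x\|_X \leq \|x\|_* \leq C\|x\|_X$ each $P_n$ is continuous. For a finite $N = \{j_1 < \cdots < j_m\} \subseteq \NN$, the projection onto $\bigoplus_{j \in N} X_j$ is $\sum_{i=1}^m (P_{j_i} - P_{j_i - 1})$, hence continuous; for cofinite $N$, it is $I$ minus the corresponding finite-subset projection, hence also continuous. I expect the completeness step to be the only part requiring care; everything else is bookkeeping.
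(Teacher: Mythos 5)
Your proposal is correct and follows essentially the same route as the paper: show $(X,\|\cdot\|_{*})$ is a Banach space (using closedness of each finite-dimensional $X_j$) and then apply the bounded inverse / open mapping theorem to the continuous identity map onto $(X,\|\cdot\|_{X})$. The only difference is that you spell out the completeness verification and the reduction of the finite/cofinite projections to the partial-sum projections $P_n$, which the paper leaves to the reader.
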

\begin{proof}
	Since finite-dimensional topological vector spaces are complete, each $X_j$ is closed in $X$.
	One can verify that $X$ is a Banach space with respect to $\| \cdot \|_{*}$.
	The canonical map $(X, \|\cdot\|_{*}) \rightarrow (X, \|\cdot\|_{X})$ is a continuous bijection between Banach spaces, hence a homeomorphism.
\end{proof}

In view of \Cref{pro:quasifinite_2} and \Cref{thm:control}, we obtain \Cref{thm:control_schauderbasis} stated at the beginning of this section by following the next remaining steps.
%%\begin{corollary} \label{cor:sufficient_necessary}
%	Given $S = \system(A, B, C)$ such that $\domain(e^A)$ has a Schauder subspace-basis adapted to $A$.
%	\Cref{prob:control} is solvable for $S$ if $S$ is stabilizable, detectable, and has finite unstable part.
%\end{corollary}
%\begin{proof}
Write $S \cong S_u + S_s$ where $S_u$ is a finite-dimensional and $S_s$ is exponentially stable, by \Cref{lem:stabilizability}, $S_u = \system(A_u, B_u, C_u)$ is stabilizable and detectable. 
	Since there exists finite-rank $F$ such that $S_u \circ F = \system(A_u, B_u F, C_u)$ remains stabilizable, $S \circ F$ is stabilizable, detectable, and has finite unstable part. By \Cref{pro:quasifinite_2}, $S \circ F$ is quasi-finite, so \Cref{prob:control} is solvable for $S \circ F$, which completes the proof.

\section{Examples} \label{sect:applications}

\subsection{Parabolic equation}

A prototypical parabolic equation is the heat equation
\begin{align} \label{eq:heat}
	\left\{
	\begin{aligned}
		\partial_t x(t, \xi) &= \Delta_{\xi} x(t, \xi) \quad &&\text{on } \RRge{0} \times \Omega, \\
		x(t, \xi) &= 0 \quad &&\text{on } \RRge{0} \times \Omega_1, \\
		\hat{n} \cdot \gradient_{\xi} x(t, \xi) &= 0 \quad &&\text{on } \RRge{0} \times \Omega_2,
	\end{aligned}
	\right.
\end{align}
where $\Omega$ is a compact $d$-dimensional submanifold of $\RR^d$ with smooth boundary $\partial \Omega$, which is the union of disjoint closed sets $\Omega_1$, $\Omega_2$, and $\hat{n}$ is the outward pointing normal vector on the boundary.
%\Cref{eq:heat} can be regarded as a system (as we have defined) in many ways, depending on the regularity we impose on $x$.
A weak formulation of \Cref{eq:heat} is the abstract Cauchy problem $\dot{x} = Ax$, where $A$ is an unbounded operator on $L^2(\Omega)$ defined by
\begin{align} \label{eq:laplace_op}
	\begin{aligned}
	\domain(A) &= \left\{x \in H^2(\Omega) \mid \, x = 0 \text{ in } H^{3/2}(\Omega_1), \; \hat{n} \cdot \gradient x = 0 \text{ in } H^{1/2}(\Omega_2) 
	\right\}, \\
	A &= \Delta x.
	\end{aligned}
\end{align}
The operator $A$ is sectorial \cite[Chapter 7]{pazy83}, has compact resolvents (because $H^2(\Omega) \hookrightarrow L^2$ is compact by Rellich-Kondrachov), and is self-adjoint (as a symmetric semigroup generator).
More generally, typical parabolic systems have the form $\system(A, B, C)$ for some sectorial operator $A$ with compact resolvents.
Such systems always have finite unstable part.

\begin{lemma} \label{lem:sectorial_fup}
	Any sectorial operator $A$ with compact resolvents has finite unstable part.
\end{lemma}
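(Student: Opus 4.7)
The plan is to combine compactness of the resolvent with the geometry of the sector containing $\spec(A)$ to show that $\spec(A) \cap \CCge{-\epsilon}$ is finite for some $\epsilon > 0$, and then extract the finite-dimensional unstable subspace via the Riesz spectral projection exactly as in \Cref{pro:finite_unstable2}.

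First I would exploit the compact resolvent hypothesis to conclude that $\spec(A)$ consists entirely of isolated eigenvalues of finite algebraic multiplicity with no finite accumulation point; this is a standard consequence of Riesz--Schauder theory applied to $(\lambda - A)^{-1}$ for some $\lambda$ in $\rho(A)$. Next I would use sectoriality to localize $\spec(A)$: by definition there exist $\omega \in \RR$ and $\theta \in (0, \pi/2)$ such that $\spec(A) \subseteq \Sigma := \{z \in \CC : |\arg(\omega - z)| \leq \theta\}$. A direct geometric calculation shows that, for every $\epsilon > 0$, the set $\Sigma \cap \{\re z \geq -\epsilon\}$ is contained in the bounded rectangle $\{-\epsilon \leq \re z \leq \max(\omega, 0),\, |\im z| \leq (\max(\omega, 0) + \epsilon)\tan\theta\}$. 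Combined with discreteness, this forces $\spec(A) \cap \CCge{-\epsilon}$ to be finite for every $\epsilon > 0$. In particular, $Z_{\bar u} := \spec(A) \cap \CCge{0}$ is finite, and by choosing $\epsilon > 0$ smaller than the absolute value of the largest real part among the (finitely many) eigenvalues in $\{-1 \leq \re z < 0\}$, we arrange $\spec(A) \cap \CCge{-\epsilon} = Z_{\bar u}$.

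I would then apply the Riesz projection $P_{\bar u} = \frac{1}{2\pi i} \int_{\Gamma} (z - A)^{-1} dz$ along a closed curve $\Gamma \subseteq \rho(A) \cap \CCg{-\epsilon}$ positively encircling $Z_{\bar u}$, exactly as in \Cref{pro:finite_unstable2}. This yields closed $A$-invariant subspaces $X_{\bar u} = \Ima P_{\bar u}$ and $X_{\bar s} = \Ima(I - P_{\bar u})$ with $X = X_{\bar u} \oplus X_{\bar s}$; since $Z_{\bar u}$ consists of finitely many eigenvalues of finite algebraic multiplicity, $\dim X_{\bar u} < \infty$. The restriction $A_{\bar s}$ of $A$ to $X_{\bar s}$ is itself sectorial and satisfies $\spec(A_{\bar s}) \subseteq \CCl{-\epsilon}$. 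To finish, because $A_{\bar s}$ generates an analytic semigroup, the Dunford integral representation along a contour $\Gamma' \subseteq \{\re z \leq -\epsilon/2\}$, combined with the sectorial resolvent estimate $\|(z - A_{\bar s})^{-1}\| \lesssim |z - \omega|^{-1}$, gives $\|e^{A_{\bar s} t}\| \lesssim e^{-\epsilon t / 2}$ (see, e.g., \cite[Thm.~2.5.2]{pazy83}), which is the required exponential decay on $X_{\bar s}$.

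The only genuinely delicate point in this argument is the geometric observation that the sector has bounded intersection with every right half-plane $\{\re z \geq -\epsilon\}$; this is precisely where the strict inequality $\theta < \pi/2$ (equivalently, analyticity of the semigroup) is used. Everything else is a routine combination of two standard ingredients: compact resolvents force the spectrum to be discrete with isolated eigenvalues of finite multiplicity, and analyticity forces the growth bound of $e^{A_{\bar s} t}$ to coincide with its spectral bound.
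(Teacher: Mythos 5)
Your proposal is correct, and it coincides with the paper's argument up to and including the spectral decomposition: compact resolvents make $\spec(A)$ a discrete set of eigenvalues of finite algebraic multiplicity, sectoriality (with half-angle strictly less than $\pi/2$) confines $\spec(A)\cap\CCge{-\epsilon}$ to a bounded, hence finite, set, and the Riesz projection along a contour encircling the unstable eigenvalues yields $X = X_{\bar u}\oplus X_{\bar s}$ with $\dim X_{\bar u}<\infty$, exactly as in \Cref{pro:finite_unstable2}. Where you genuinely diverge is the proof that the semigroup on the stable part decays exponentially. You use the spectrum-determined-growth property of analytic semigroups: $A_{\bar s}$ is again sectorial with $\spec(A_{\bar s})\subseteq\CCl{-\epsilon}$, and shifting the Dunford contour into $\{\re z\leq -\epsilon/2\}$ gives $\|e^{A_{\bar s}t}\|\lesssim e^{-\epsilon t/2}$, with an explicit rate tied to the spectral gap. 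The paper instead shows, via the functional-calculus representation and the resolvent identity, that the time-one map $e^{A_{\bar s}}$ is a \emph{compact} operator; compactness forces pure point nonzero spectrum, the point-spectrum spectral mapping theorem places it inside the open unit disc, and the Gelfand spectral radius formula then gives exponential decay without an explicit rate. Both routes are sound: yours is the standard quantitative argument and produces a rate, while the paper's avoids shifted-contour resolvent estimates at the price of using compactness of the semigroup (available here because the resolvent is compact). Two cosmetic caveats on your write-up: the citation \cite[Thm.~2.5.2]{pazy83} is not verbatim the statement you need (the relevant fact is the growth bound of an analytic semigroup being governed by the spectral bound), and the contour shift tacitly uses that the half-plane $\{\re z\geq -\epsilon/2\}$ lies at positive distance from $\spec(A_{\bar s})$ with the resolvent bounded on the bounded portion of that region outside the sector of the resolvent estimate — true here, but worth a sentence.
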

\begin{proof}
	Since $A$ has compact resolvents, its spectrum consists of isolated eigenvalues.
	Since $A$ is sectorial, $\spec(A) \cap \CCge{0}$ is finite.
	Let $\Gamma$ be any positively oriented simple closed curve encircling only the eigenvalues of $A$ in $\CCge{0}$.
	Define the projection operators (see \cite[Lemma 2.5.7]{curtain95} for their properties)
	\begin{align*}
		P_u = \frac{1}{2\pi i} \int_{\Gamma} (z - A)^{-1} dz, 
		\qquad
		P_s = 1 - P_u,
	\end{align*}
	we claim that the desired decomposition in \Cref{def:finite_unstable} is given by $\domain(e^A) = X_s \oplus X_u = P_s \domain(e^A) \oplus P_u \domain(e^A)$.
	
	Let $A_s$ and $A_u$ be the restrictions of $A$ to $X_s$ and $X_u$.
	We know from the properties of the projections that $\spec(A_u) = \spec(A) \cap \CCge{0}$, and $X_u$ is the sum of the unstable generalized eigenspaces of $A$.
	It remains to prove that $A_s$ is exponentially stable. 
	Since $A_s$ is sectorial, $e^{A_s}$ can be expressed using sectorial functional calculus as $\frac{1}{2\pi i} \int_{\Gamma} e^{z} (z - A_s)^{-1} dz$, for some suitable $\Gamma$.
	By the resolvent identity, $e^{A_s}$ is equal to
	\begin{align*}
		\frac{1}{2\pi i} (\omega - A_s)^{-1} \int_{\Gamma} e^{z} (1 + (\omega - z) (z - A_s)^{-1}) dz
	\end{align*}
	for some fixed $\omega \not\in \spec(A_s)$.
%	By the functional calculus formula for sectorial operators and the resolvent identity,
%	\begin{align*}
%		e^{A_s} &= \frac{1}{2\pi i} \int_{\Gamma} e^{z} (z - A_s)^{-1} dz \\
%		&= \frac{1}{2\pi i} (\omega - A_s)^{-1} \int_{\Gamma} e^{z} (1 + (\omega - z) (z - A_s)^{-1}) dz
%	\end{align*}
%	for a suitably defined curve $\Gamma$ and some fixed $\omega \not\in \spec(A_s)$.
	This implies that $e^{A_s}$ is compact, hence has pure point spectrum.
	By the spectral mapping property of point spectrum \cite[Chpt 2, Thm 2.4]{pazy83}, $\spec(e^{A_s}) \cup \{0\} = e^{\spec(A_s)}\cup \{0\} \subseteq \{ z \in \CC \mid |z| < 1 \}$.
	We conclude that the spectral radius of $e^A$ is strictly less than $1$, and that $\| e^{At} \|$ decays exponentially by the spectral radius formula.
\end{proof}

If we can show that $\domain(e^A)$ has a Schauder subspace-basis adapted to $A$, then the parabolic system $\system(A, B, C)$ is exponentially stabilized (by a finite-dimensional linear controller) if and only if it is stabilizable and detectable.
Since $A$ has compact resolvents, the Schauder subspace-basis assumption is verified in particular when $A$ is self-adjoint.
In this case, the eigenspaces of $A$ form a Schauder subspace-basis adapted to $A$. 
When $A$ is not self-adjoint, the assumption may be verified by perturbation methods such as \cite[XIX.2.7]{dunford71} or \cite[Chpt V, Thm 4.15a]{kato80}.

\begin{example}[Self-adjoint heat equation] \label{ex:parabolic_1} \stepcounter{refer}
	Consider the parabolic equation
	\begin{align*}
		\left\{
		\begin{aligned}
		\partial_t x(t, \xi) &= \partial_{\xi}^2 x + b x + f(\xi) u(t) \quad \text{on } \RRge{0} \times [0,1], \\
		\partial_{\xi} x(t, 0) &= \partial_{\xi} x(t, 1) = 0, \\
		y(t) &= x(t, 0),
		\end{aligned}
		\right.
	\end{align*}
	where $f \in L^2([0,1])$.
	The weak formulation of this equation is the abstract Cauchy problem on state space $X = L^2([0,1])$:
	\begin{align} \labellocal{eq:acp}
		\begin{aligned}
		\dot{x}(t) &= A x(t) + B u(t), \\
		y(t) &= C x(t)
		\end{aligned}
	\end{align}
	where $\domain(A) = \{x \in H^2([0,1]) \mid \partial_{\xi} x(0) = \partial_{\xi}x(1) = 0\}$, $Ax = \partial_{\xi}^2 x$, $\domain(C) = \domain(A)$, $Bu = fu$, and $Cx = x(0)$.
 	Since $A$ is a self-adjoint sectorial operator with compact resolvents, \Creflocal{eq:acp} can be exponentially stabilized (in $L^2$) iff it is stabilizable and detectable.
	Since \Creflocal{eq:acp} has finite unstable part, it is stabilizable and detectable iff its projection to every generalized eigenspace is stabilizable and detectable.
	%(in view of \Cref{lem:stabilizability}).
	By direct computations, $\spec(A) = \{b-\pi^2 k^2\}_{k \in \ZZge{0}}$ and $\Ker (b - \pi^2 k^2 - A) = \Span\{ \cos(\pi k \xi) \}$.
	The projection of \Creflocal{eq:acp} to $\Span\{ \cos(\pi k \xi) \}$ is the one-dimensional system
	\begin{align*}
		\dot{x} &= (b - \pi^2 k^2) x + \frac{\langle \cos(\pi k \xi), f \rangle_{L^2}}{\langle \cos(\pi k \xi), \cos(\pi k \xi) \rangle_{L^2}} \cdot u, \\
		y &= x.
	\end{align*}
	%where any state $x(t) \in \CC$ here corresponds to the state $x(t) \cos(\pi k \xi) \in L^2$ of \Creflocal{eq:acp}.
	This projected system is always detectable and stabilizable iff either $b - \pi^2 k^2 < 0$ or $\langle \cos(\pi k \xi), f \rangle_{L^2} \neq 0$.
	Therefore, \Creflocal{eq:acp} can be exponentially stabilized by a finite-dimensional controller iff $\langle \cos(\pi k \xi), f \rangle_{L^2} \neq 0$ for every $k \in \ZZge{0}$ such that $\pi^2 k^2 \leq b$.
%	The only unstable eigenvector of $A$ is $1 \in L^2([0,1])$.
%	With respect to the decomposition $L^2([0,1]) = \Span\{1\} \oplus \Span\{1\}^{\perp}$, \Creflocal{eq:acp} writes
%	\begin{align*}
%		\partial_t \begin{bmatrix} x_u \\ x_s \end{bmatrix} &= \begin{bmatrix} A_u & 0 \\ 0 & A_s \end{bmatrix} \begin{bmatrix} x_u \\ x_s \end{bmatrix} + \begin{bmatrix} B_u \\ B_s \end{bmatrix} u, \\
%		y &= \begin{bmatrix} C_u & C_s \end{bmatrix} \begin{bmatrix} x_u \\ x_s \end{bmatrix},
%	\end{align*}
%	where, $A_u$, $B_u$, $C_u$ are the restrictions/projections to $\Span\{1\}$, so $A_u = 0$, $B_u = \langle 1, f \rangle_{L^2}$ and $C_u = 1$.
%	Since $A_s$ is a sectorial operator with compact resolvents and spectrum $\spec(A_s) = \{-\pi^2 k^2\}_{k \in \ZZge{1}}$, it is exponentially stable.
%	We conclude that \Creflocal{eq:acp} is stabilizable and detectable iff $B_u \neq 0$.
\end{example}

\begin{example}[Heat equation with boundary input] \stepcounter{refer}
	Consider the equation
	\begin{align} \labellocal{eq:parabolic}
		\left\{
		\begin{aligned}
		\partial_t x(t, \xi) &= \partial_{\xi}^2 x + b x + f(\xi) u(t) \quad \text{on } \RRge{0} \times [0,1], \\
		\partial_{\xi} x(t, 0) &= 0, \\
		\partial_{\xi} x(t, 1) &= u(t), \\
		y(t) &= x(t, 0).
		\end{aligned}
		\right.
	\end{align}
	Due to the presence the input at the boundary, this equation does not take the form $\system(*, *, *)$.
	This issue can be resolved by a change of variable.
	Let $h \in H^2([0,1])$ and $a \in \CC$ be any pair satisfying $(\partial_{\xi}^2 + b) h = a h,\, \partial_{\xi}h(0) = 0,\, \partial_{\xi}h(1) = 1$, then $(u, x, y)$ solves \Creflocal{eq:parabolic} if and only if $z(t, \xi) = x(t, \xi) - h(\xi) u(t)$ solves
	\begin{align*}
		\left\{
		\begin{aligned}
		\partial_t z(t, \xi) &= \partial_{\xi}^2 z + b z + f(\xi) u + a h(\xi) u - h(\xi) \partial_t u, \\
		\partial_{\xi} z(t, 0) &= 0, \\
		\partial_{\xi} z(t, 1) &= 0, \\
		y(t) &= z(t, 0) + h(0) u(t).
		\end{aligned}
		\right.
	\end{align*}
	This parabolic equation can be interpreted as a system with input $v = \partial_t u$, state $(u, z)$, and output $y$,
	which is the abstract Cauchy problem on the state space $\CC \times L^2([0,1])$:
	\begin{align} \labellocal{eq:acp}
		\begin{aligned}
		\partial_t \begin{bmatrix} u \\ z \end{bmatrix} &= \begin{bmatrix} 0 & 0 \\ f + ah & A \end{bmatrix} \begin{bmatrix} u \\ z \end{bmatrix} 
		+ \begin{bmatrix} 1 \\ -h \end{bmatrix} v, \\
		y &= \begin{bmatrix} h(0) & C \end{bmatrix} \begin{bmatrix} u \\ z \end{bmatrix}
		\end{aligned}
	\end{align}
	where $A, C$ are defined in \Cref{ex:parabolic_1}.
	To stabilize \Creflocal{eq:parabolic}, it suffices to stabilize \Creflocal{eq:acp}.
	%Any controller stabilizing \Creflocal{eq:acp} stabilizes \Creflocal{eq:parabolic}.
	
	We will characterize exponential stabilizability of \Creflocal{eq:acp}.
	The spectrum of the generator $M$ of \Creflocal{eq:acp} consists of $\spec(A)$ and an additional eigenvalue at $0$.
	At any nonzero $b - \pi^2 k^2 \in \spec(A)$, the corresponding eigenspace of $M$ is $\Span\{\begin{bmatrix} 0 & \cos(\pi k \xi) \end{bmatrix}^\top \}$.
	The $0$-eigenspace of $M$ is 
	\begin{align*}
		\Span\left\{\begin{bmatrix} 1 & g_1 \end{bmatrix}^\top \right\} + \Span \left\{ \begin{bmatrix} 0 & \cos(\pi k \xi) \end{bmatrix}^\top \mid k \in \ZZge{0} ,\, b - \pi^2 k^2 = 0 \right\}
	\end{align*}
	where $g_1$ is any vector in $\domain(A)$ such that $f + ah + Ag_1 =: g_2 \in \Ker A$.
	The vector $g_1$ exists because $A$ is invertible on $(\Ker A)^{\perp}$, the image of the spectral projection to $\spec(A) \setminus \{0\}$. 
	From this, we see that $M$ has finite unstable part and has an adapted Schauder subspace-basis. 
	Therefore, \Creflocal{eq:acp} can be exponentially stabilized iff it is stabilizable and detectable.
	This is verified by projecting \Creflocal{eq:acp} onto its eigenspaces.
	
	The projection onto the $0$-eigenspace is the two-dimensional system
	\begin{align*}
		\begin{aligned}
		\partial_t \begin{bmatrix} x_1 \\ x_2 \end{bmatrix} &= \begin{bmatrix} 0 & 0 \\ \frac{\langle \cos(\pi k \xi), g_2 \rangle_{L^2}}{\langle \cos(\pi k \xi), \cos(\pi k \xi) \rangle_{L^2}} & 0 \end{bmatrix} \begin{bmatrix} x_1 \\ x_2 \end{bmatrix} + \begin{bmatrix} 1 \\ -\frac{\langle \cos(\pi k \xi), h + g_1 \rangle_{L^2}}{\langle \cos(\pi k \xi), \cos(\pi k \xi) \rangle_{L^2}} \end{bmatrix} v, \\
		y &= \begin{bmatrix} h(0) + C g_1 & 1 \end{bmatrix} \begin{bmatrix} x_1 \\ x_2 \end{bmatrix}
		\end{aligned}
	\end{align*}
	if there exists $k \in \ZZge{0}$ such that $\pi^2 k^2 = b$; or the one-dimensional system $\dot{x} = v$, $y = (h(0) + C g_1) x$ if no such $k$ exists.
	The projection onto the eigenspace of a nonzero eigenvalue $b - \pi^2 k^2$ is the system
	\begin{align*}
		\dot{x} &= (b - \pi^2 k^2) x - \frac{\langle \cos(\pi k \xi), h + g_1 \rangle_{L^2}}{\langle \cos(\pi k \xi), \cos(\pi k \xi) \rangle_{L^2}} \cdot v, \\
		y &= x.
	\end{align*}
	These projected systems are stabilizable and detectable iff
	%, a sufficient condition for the exponential stabilizability of \Creflocal{eq:parabolic} is 
	\begin{align} \labellocal{eq:constraint}
		\left\{
		\begin{aligned}
			\langle \cos(\pi k \xi), h + g_1 \rangle_{L^2} &\neq 0 \qquad \text{for } k \in \ZZge{0},\, \pi^2 k^2 < b, \\
			\langle \cos(\pi k \xi), g_2 \rangle_{L^2} &\neq 0 \qquad \text{for } k \in \ZZge{0},\, \pi^2 k^2 = b, \\
			h(0) + C g_1 &\neq 0.
		\end{aligned}
		\right.
	\end{align}
	One can show that this finite number of inequalities is simultaneously verified by a generic choice of $a > b$ and corresponding $h(\xi) = \cosh(\sqrt{a - b} \cdot \xi) / \sinh(\sqrt{a-b})$.
	Therefore, \Creflocal{eq:parabolic} can be exponentially stabilized regardless of the parameters $b$ and $f$.
	
	The existence of some parameter $a$ fulfilling \Creflocal{eq:constraint} can be justified as follows.
	Note that the mapping $(b, \infty) \rightarrow L^2([0,1]) : a \mapsto h$ is holomorphic.
	Furthermore, set $g_2$ be the projection of $f+ah$ to $\Ker A$, and $g_1 = \restr{A}{(\Ker A)^{\perp}}^{-1} (g_2 - f-ah)$, then the functions $a \mapsto g_1$ and $a \mapsto g_2$ are also holomorphic.
	Therefore, every expression in \Creflocal{eq:constraint} is holomorphic in $a$.
	Since the zeros of non-trivial holomorphic functions are isolated, it suffices to show that these expressions do not vanish identically.
	This can be checked explicitly.
	For example, for $k \in \ZZge{0}$ satisfying $\pi^2 k^2 < b$, the expression
	\begin{align*}
		\langle \cos(\pi k \xi), h + g_1 \rangle_{L^2} &= \langle \cos(\pi k \xi), h \rangle_{L^2} + \frac{1}{b - \pi^2 k^2} \langle \cos(\pi k \xi) , g_2 - f - ah \rangle_{L^2} \\
		&= \left( \frac{b - \pi^2 k^2 - a}{b - \pi^2 k^2} \right) \langle \cos(\pi k \xi), h \rangle_{L^2} - \frac{1}{b - \pi^2 k^2} \langle \cos(\pi k \xi), f \rangle_{L^2}
	\end{align*}
	is non-zero at some $a > b$.
\end{example}

\subsection{Hyperbolic equation}

In this section, we examine wave equations of the form
\begin{align} \label{eq:wave}
	\left\{
	\begin{aligned}
		(\partial_t^2 + \kappa \partial_t) x(t, \xi)  &= \Delta_{\xi}^2 x(t, \xi) \quad &&\text{on } \RRge{0} \times \Omega \\
		x(t, \xi) &= 0 \; &&\text{on } \RRge{0} \times \Omega_1 \\
		\hat{n} \cdot \gradient_{\xi} x(t, \xi) &= 0 \; &&\text{on } \RRge{0} \times \Omega_2
	\end{aligned}
	\right.
\end{align}
where $\kappa \in \RR$, and $\Omega$ is a compact $d$-dimensional submanifold of $\RR^d$ with smooth boundary $\partial \Omega$, which is the union of disjoint closed sets $\Omega_1$, $\Omega_2$.
A weak formulation of this equation is the second-order abstract Cauchy problem $\partial_t^2 x + \partial_t x = Ax$
where $A$ is the unbounded operator defined in \Cref{eq:laplace_op}.
Letting $v = \partial_t x$, we have the equivalent abstract Cauchy problem
\begin{align*}
	\partial_t \begin{bmatrix} x \\ v \end{bmatrix} &= \underbrace{\begin{bmatrix} 0 & 1 \\ A & -\kappa \end{bmatrix}}_{M} \begin{bmatrix} x \\ v \end{bmatrix}.
\end{align*}
It is known that $M$ generates a strongly continuous group.
This result can be stated generally as

\begin{proposition} \stepcounter{refer}
	Let $A$ be a self-adjoint generator of a semigroup on $X$.
	Fix $\omega > 0$ such that $\omega - A$ is strictly positive, hence $(\omega - A)^{1/2}$ is an invertible (unbounded) operator.
	Denote $X_{1/2} = \domain((\omega - A)^{1/2})$ and $X_1 = \domain(\omega - A)$, then
	$M$ generates a strongly continuous group on $X_{1/2} \times X$, whose domain is $X_1 \times X_{1/2}$.
	Furthermore, if $A$ has compact resolvents, then the generalized eigenvectors of $M$ span a dense subspace of $X_{1/2} \times X$.
\end{proposition}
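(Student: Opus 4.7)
The plan is to analyze $M$ as a bounded perturbation of a skew-adjoint operator, and then use a spectral decomposition of $A$ for the second assertion.

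Equip $H := X_{1/2} \times X$ with the inner product $\langle (x_1,v_1),(x_2,v_2)\rangle_H = \langle (\omega-A)^{1/2} x_1, (\omega-A)^{1/2} x_2\rangle + \langle v_1, v_2\rangle$, which makes it a Hilbert space because $(\omega-A)^{1/2}$ is an isometric isomorphism $X_{1/2} \to X$ by strict positivity. Decompose $M = N + K$, where $N := \begin{bmatrix} 0 & 1 \\ A-\omega & 0 \end{bmatrix}$ on $\domain(N) = X_1 \times X_{1/2}$, and $K := \begin{bmatrix} 0 & 0 \\ \omega & -\kappa \end{bmatrix}$. Strict positivity gives a continuous embedding $X_{1/2} \hookrightarrow X$, so $K$ is bounded on $H$.

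Next I would show $N$ is skew-adjoint on $H$. A direct computation using self-adjointness of $A$ yields the skew-symmetry identity $\langle N u_1, u_2\rangle_H + \langle u_1, N u_2\rangle_H = 0$ on $\domain(N)$. To upgrade to skew-adjointness I would check that $1 - N$ is surjective onto $H$: for $(f,g) \in H$, the equation $(1-N)(x,v) = (f,g)$ reduces to $(1+\omega-A) x = f + g$ together with $v = x - f$; since $1+\omega-A$ is a positive invertible self-adjoint operator, $x = (1+\omega-A)^{-1}(f+g) \in X_1$ and $v \in X_{1/2}$ (using $X_1 \subseteq X_{1/2}$). Skew-symmetry also gives $\|(1-N)u\|_H \geq \|u\|_H$, so $1-N$ is bijective with bounded inverse. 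Hence $N$ is maximal skew-symmetric, i.e.\ skew-adjoint, and generates a unitary $C_0$-group on $H$ by Stone's theorem. Adding back the bounded operator $K$ then yields the $C_0$-group generated by $M$ with domain $X_1 \times X_{1/2}$.

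For the second claim, compactness of the resolvents together with self-adjointness of $A$ furnishes an orthonormal basis $\{\phi_k\}_{k \in \NN}$ of $X$ with $A\phi_k = \lambda_k \phi_k$ and $\phi_k \in X_1$. Each two-dimensional subspace $V_k := \Span\{(\phi_k, 0), (0, \phi_k)\} \subseteq X_1 \times X_{1/2}$ is $M$-invariant, and the restriction of $M$ to $V_k$ is represented by $\begin{bmatrix} 0 & 1 \\ \lambda_k & -\kappa \end{bmatrix}$, whose generalized eigenvectors span $V_k$. Thus the generalized eigenvectors of $M$ contain $\bigoplus_k V_k$ algebraically, and it remains to show this sum is dense in $H$. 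Density of $\Span\{\phi_k\}$ in $X$ is standard, and density in $X_{1/2}$ follows since $(\omega-A)^{1/2}$ is an isometric isomorphism $X_{1/2} \to X$ sending each $\phi_k$ to a scalar multiple of itself.

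The main obstacle is the skew-adjointness of $N$ with the precise domain $X_1 \times X_{1/2}$---the resolvent computation for $1 - N$ is the key step. Once $N$ is known to generate a unitary group, bounded perturbation and the pointwise analysis on the $V_k$ dispatch the remainder.
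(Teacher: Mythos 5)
Your argument is correct in substance and takes a genuinely different route from the paper's. The paper applies Lumer--Phillips directly to $\lambda \pm M$ on $X_{1/2}\times X$: quasi-dissipativity is obtained by computing $\langle M(x,v),(x,v)\rangle_{X_{1/2}\times X}$, whose cross terms reduce to $2\im\langle Ax,v\rangle_X$ plus lower-order terms, and surjectivity of $\lambda\pm M$ is checked by reducing the resolvent equation to $(\lambda(\lambda+\kappa)-A)x=\cdots$ for $\lambda$ large. You instead split $M=N+K$ with $N$ skew-adjoint and $K$ bounded, and invoke Stone's theorem plus bounded perturbation; your resolvent equation $(1+\omega-A)x=f+g$ is simpler because $\omega-A$ is strictly positive, and the group property and the domain $X_1\times X_{1/2}$ come for free. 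For the density claim, the paper exhibits the eigenvectors $(x,\lambda_j x)$ with $\lambda_j(\lambda_j+\kappa)=\mu$ and treats the double-root case separately via a generalized eigenvector, whereas you observe that $V_k=\Span\{(\phi_k,0),(0,\phi_k)\}$ is $M$-invariant and that the generalized eigenvectors of any matrix span the (finite-dimensional) space, which avoids the case split; both arguments then use completeness of $\{\phi_k\}$ in $X$ and in $X_{1/2}$.

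One caveat: skew-symmetry together with surjectivity of $1-N$ alone does not imply skew-adjointness (``maximal skew-symmetric'' is strictly weaker; consider $\partial_\xi$ with Dirichlet domain on a half-line), and by itself it only shows that $N$ generates a contraction semigroup, not a group. You also need $1+N$ to be surjective. Here this is immediate, either by repeating your computation with the sign flipped (yielding $(1+\omega-A)x=f-g$) or by noting that $(x,v)\mapsto(x,-v)$ is a unitary on $X_{1/2}\times X$ conjugating $N$ into $-N$. With that one line added, your proof is complete.
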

\begin{remark}
	The space $X_{1/2}$ is independent of $\omega$, because $(\omega_1 - A)^{1/2} (\omega_2 - A)^{-1/2}$ is bounded by functional calculus.
	Furthermore, for the operator $A$ defined in \Cref{eq:laplace_op}, $X_{1/2}$ is the concrete Sobolev space $H^1_0(\Omega \setminus \Omega_1)$.
	This is proved by noticing that $(\omega - A)^{-1/2} : L^2 \rightarrow X_{1/2}$ is a Banach space isomorphism, so $X_1 \subseteq L^2$ gets mapped to a dense subset of $X_{1/2}$.
	Since $(\omega - A)^{-1/2} X_1 \subseteq \domain(\omega - A) = X_1$, $X_{1/2}$ is the closure of $X_1$ in the norm $\| \cdot \|_{X_{1/2}}$.
	Finally, the graph norm $\|x\|_{X_{1/2}}^2$ is equivalent to $\|(\omega - A)^{1/2} x\|_{L^2}^2 = \langle (\omega - A)x, x \rangle_{L^2}$, which is also equivalent to $\|x\|_{H^1}^2$.
	% (as shown in the proof of \Cref{pro:laplace_op}).
	Therefore, $X_{1/2}$ is the closure of $\{x \in H^2(\Omega) \mid x = 0 \text{ on } \Omega_1, \hat{n} \cdot \gradient x = 0 \text{ on } \Omega_2 \}$ in $H^1$, which is $H^1_0(\Omega \setminus \Omega_1)$.
\end{remark}
\begin{proof}
	By Lumer-Phillips \cite[Chpt 1, Thm 4.3]{pazy83}, it suffices to show that $\lambda \pm M$ are dissipative and surjective for some $\lambda > 0$.
	Simple calculations show that  
	\begin{align} \labellocal{eq:eigen}
		\left( \begin{bmatrix} \lambda & 0 \\ 0 & \lambda \end{bmatrix} - \begin{bmatrix} 0 & 1 \\ A & -\kappa \end{bmatrix} \right) \begin{bmatrix} x \\ v \end{bmatrix} &= \begin{bmatrix} f \\ g \end{bmatrix}
	\end{align}
	has a solution $(x, v) \in X_1 \times X_{1/2}$ for every $(f, g) \in X_{1/2} \times X$ iff $\lambda(\lambda + \kappa) \not\in \spec(A)$.
	Since $A$ generates a semigroup, $\lambda \pm M$ is surjective as long as $\lambda$ is sufficiently large.
	To show dissipativity of $\lambda \pm M$, note that
	%that the graph norm $\|x\|_{X_{1/2}}$ and $\|(\omega - A)^{1/2} x\|_{X}$ are equivalent by bijectivity, so
	\begin{align*}
		&\phantom{{}={}} \left\langle \begin{bmatrix} 0 & 1 \\ A & -\kappa \end{bmatrix} \begin{bmatrix} x \\ v \end{bmatrix}, \begin{bmatrix} x \\ v \end{bmatrix} \right\rangle_{X_{1/2} \times X} \\
		&= \langle (\omega - A)^{1/2} v, (\omega - A)^{1/2} x \rangle_{X} + \langle Ax, v \rangle_{X} - \kappa \|v\|_{X}^2 \\
		&= \overline{\langle (\omega - A) x, v \rangle_{X}} + \langle Ax, v \rangle_{X} - \kappa \|v\|_{X}^2 \\
		&= 2 \im \langle Ax, v \rangle_{X} + \overline{\langle \omega x, v \rangle_{X}} - \kappa \|v\|_{X}^2.
	\end{align*}
	The first term has zero real part, and the last two terms are bounded by some constant multiple of the norm of $(x, v)$.
	Therefore, $\lambda \pm M$ are dissipative for sufficiently large $\lambda$.
	
	To prove that the eigenvectors of $M$ span a dense subset of $X_{1/2} \times X$, we note that the eigenvectors of $A$ is a complete basis of $X$.
	By \Creflocal{eq:eigen}, for any $\mu \in \spec(A)$ with associated eigenvector $x \in \Ker(\mu - A)$, the vector $(x, \lambda_j x)$ is an eigenvector of $M$ for both roots $\lambda_j$ of $\lambda(\lambda + \kappa) = \mu$.
	In the case $\lambda(\lambda + \kappa) = \mu$ has two distinct roots, the two eigenvectors span $(x, 0)$ and $(0, x)$.
	In the case $\lambda(\lambda + \kappa) = \mu$ has a double root, $(0, x)$ is a generalized eigenvector with eigenvalue $\lambda_j$. 
	Therefore, the generalized eigenvectors of $M$ span a subspace of $X_{1/2} \times X$ containing $(x, 0)$ and $(0, x)$ for all eigenvectors $x$ of $A$.
	Such $x$ form a complete basis of both $X$ and $X_{1/2}$, which implies the desired result.
\end{proof}

For operators $A$ generating strongly continuous groups, there is a general sufficient condition ensuring that $A$ is a Riesz-spectral operator.

\begin{lemma}[\cite{zwart10}] \label{lem:riesz_spectral}
	If $A$ generates a strongly continuous group on a Hilbert space $X$, the eigenvectors of $A$ span a dense subset of $X$, and $\spec(A)$ consists of simple eigenvalues $\{\lambda_k\}_{k\in\NN}$ for which $\inf_{k \neq l} |\lambda_k - \lambda_l| > 0$, then $A$ is a Riesz-spectral operator.
\end{lemma}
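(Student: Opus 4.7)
The plan is to show that the eigenvectors $\{\phi_k\}_{k \in \NN}$ associated with $\{\lambda_k\}$ form a Riesz basis of $X$, which is the defining property of a Riesz-spectral operator in the sense of \cite[Sect. 2.3]{curtain95}. I will proceed by constructing the spectral projections onto each eigenspace, bounding their norms uniformly using the group property, and then combining this with the completeness hypothesis to obtain the Riesz basis property.

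First, I would exploit the gap hypothesis $\delta := \inf_{k \neq l} |\lambda_k - \lambda_l| > 0$ to enclose each $\lambda_k$ by a circle $\gamma_k$ of radius $\delta/3$, so that the $\gamma_k$ are pairwise disjoint and lie entirely in the resolvent set. The corresponding spectral projection is
\begin{equation*}
	P_k = \frac{1}{2\pi i} \oint_{\gamma_k} (z - A)^{-1}\, dz,
\end{equation*}
and since each $\lambda_k$ is simple, $\Ima P_k = \Span\{\phi_k\}$. The key analytic step is to prove a uniform bound $\sup_k \|P_k\| < \infty$. This is where the full group hypothesis enters: by Hille--Yosida applied to both $e^{At}$ and $e^{-At}$, one has $\|(\lambda - A)^{-1}\| \leq M/(|\re \lambda| - \omega)$ for $|\re\lambda| > \omega$, and a short argument using the Hilbert-space structure and the Plancherel theorem (the Gearhart--Prüss theorem) yields resolvent bounds on the shifted vertical lines that feed directly into the contour integral estimate for $\|P_k\|$.

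Second, I would upgrade the pointwise bound on $\|P_k\|$ to a uniform bound on the partial-sum operators $S_N = \sum_{k=1}^{N} P_k$. For this, one represents $S_N$ as an integral around a large contour enclosing exactly $\lambda_1, \ldots, \lambda_N$, and uses that $t \mapsto e^{At}$ is a bounded group on a Hilbert space, so the symmetric partial sums converge in the strong operator topology to the identity on the dense subspace spanned by eigenvectors. By the density hypothesis and the uniform bound on $\|S_N\|$, this extends to all of $X$. Equivalently, the system $\{\phi_k\}$ together with the dual system $\{\psi_k\}$ of eigenvectors of $A^*$ (which exists by self-adjointness of the spectral decomposition of a Hilbert-space group generator up to the gap hypothesis) forms a biorthogonal pair yielding a Riesz basis.

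The hardest step, and the reason the result is non-trivial, is the uniform resolvent bound on vertical lines separating the eigenvalues: without the group assumption (only a semigroup), one only controls $(\lambda-A)^{-1}$ in a right half-plane, and the projections $P_k$ could blow up with $|\im \lambda_k|$. The group property together with the Hilbert-space structure is precisely what Gearhart--Prüss exploits to get the two-sided bound, and this is the content of \cite{zwart10}. A cleaner, self-contained presentation of this argument is given there, and my proof would essentially reproduce the structure above, citing \cite{zwart10} for the uniform resolvent estimate that powers the bound on $\|P_k\|$.
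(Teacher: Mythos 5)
You should note first that the paper does not prove this lemma at all: it is quoted verbatim from \cite{zwart10}, so the only meaningful comparison is with Zwart's argument. Measured against that, your sketch has a genuine gap at its central step, the uniform bound $\sup_k \|P_k\| < \infty$. Hille--Yosida applied to $e^{At}$ and $e^{-At}$, and likewise Gearhart--Pr\"uss, control $\|(z-A)^{-1}\|$ only for $|\re z|$ large, i.e.\ \emph{outside} the vertical strip that contains $\spec(A)$ for a group generator. Your circles $\gamma_k$ of radius $\delta/3$ lie inside that strip, where the separation hypothesis by itself gives no resolvent estimate: for a non-normal generator, $\|(z-A)^{-1}\|$ at distance $\delta/3$ from $\lambda_k$ can grow without bound as $k\to\infty$, and a uniform bound there is essentially equivalent to the Riesz-spectral property you are trying to establish. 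So the claim that the half-plane resolvent bounds ``feed directly into the contour integral estimate for $\|P_k\|$'' is circular, and \cite{zwart10} does not supply such a resolvent estimate either.

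Even granting $\sup_k\|P_k\|<\infty$, the remainder would not yield the conclusion. Uniform bounds on the ordered partial sums $S_N$ together with density of the eigenvectors give at most a Schauder basis in that particular enumeration; a Riesz basis additionally requires unconditionality and the two-sided equivalence $\|\sum_k a_k\phi_k\|^2 \asymp \sum_k |a_k|^2$, which your sketch never addresses (also, the group is only assumed strongly continuous, not bounded, and ``self-adjointness of the spectral decomposition'' is not a valid justification for the biorthogonal family; biorthogonality follows simply from simplicity of the eigenvalues). Zwart's route is structurally different: he works with the Laplace transforms of the orbits $t\mapsto e^{\pm At}x$, which lie in $H^2$ of a half-plane after an exponential shift, and uses that a uniformly separated sequence in a vertical strip generates a Carleson measure, which yields the square-function (Bessel-type) inequalities for the eigenvectors of $A$ and of $A^*$; combined with the density hypothesis this gives the Riesz basis property directly, without ever estimating individual spectral projections by contour integrals. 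To repair your proposal you would need to replace the projection-norm step by such square-function estimates (or another argument producing unconditionality), not merely cite \cite{zwart10} for a bound it does not contain.
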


Assuming that $A$ is a Riesz-spectral operator, then the equivalence in \Cref{thm:control_schauderbasis} holds for $\system(A, B, C)$.
Furthermore, $A$ has the property that it is exponentially stable if and only if $\spec(A) \subseteq \CCl{-\epsilon}$ for some $\epsilon > 0$ \cite[Thm. 2.3.5]{curtain95}.
Therefore, $A$ has finite unstable part if and only if $\spec(A) \cap \CCge{-\epsilon}$ is finite for some $\epsilon > 0$.

\begin{example}[Wave equation] \label{ex:wave1} \stepcounter{refer}
	Consider the equation
	\begin{align*}
		\left\{
		\begin{aligned}
			(\partial_t^2 + \kappa \partial_t) x(t, \xi) &= \partial_{\xi}^2 x + b x + f(\xi) u(t), \text{ on } \RRge{0} \times [0,1], \\
			\partial_{\xi} x(t, 0) &= 0, \\
			x(t, 1) &= 0, \\
			y(t) &= x(t, 0).
		\end{aligned}
		\right.
	\end{align*}
	It can be modelled as the abstract Cauchy problem on $H^1_0([0,1)) \times L^2([0,1])$:
	\begin{align} \labellocal{eq:acp}
		\begin{aligned}
		\partial_t \begin{bmatrix} x \\ v \end{bmatrix} &= \underbrace{\begin{bmatrix} 0 & 1 \\ A & -\kappa \end{bmatrix}}_{M} \begin{bmatrix} x \\ v \end{bmatrix} + \begin{bmatrix} 0 \\ B \end{bmatrix} u, \\
		y &= \begin{bmatrix} C & 0 \end{bmatrix} \begin{bmatrix} x \\ v \end{bmatrix}
		\end{aligned}
	\end{align}
	where $\domain(A) = \{x \in H^2([0,1]) \mid \partial_{\xi} x(0) = x(1) = 0\}$, $Ax = \partial_{\xi}^2 x + bx$, $Bu = fu$, and $Cx = x(0)$.
	Simple calculations show that $\spec(M)$ consists of eigenvalues
	\begin{align*}
		 \left\{ \frac{-\kappa \pm \sqrt{\kappa^2 - 4\pi^2 k^2 + 4b}}{2} \right\}_{k \in 1/2 + \ZZge{0}},
	\end{align*} 
	where each choice of $k$ and sign is counted with multiplicity one.
	All but a finite number of eigenvalues are simple, hence $M$ is a direct sum of an operator on finite-dimensional space with a Riesz-spectral operator (by \Cref{lem:riesz_spectral}). 
	Therefore, \Creflocal{eq:acp} can be exponentially stabilized iff it is stabilizable, detectable, with finite unstable part.
	By the knowledge of $\spec(M)$, \Creflocal{eq:acp} has finite unstable part iff $\kappa > 0$.
	Stabilizability and detectability are checked by decomposing the system along its generalized eigenspaces.
	For each $k \in 1/2 + \ZZge{0}$, the eigenspaces to eigenvalues $\{\lambda \mid \lambda(\lambda + \kappa) = b - \pi^2 k^2\}$ span the subspace
	\begin{align*}
		\Span\left\{ \begin{bmatrix} \cos(\pi k \xi) & 0 \end{bmatrix}^\top, \begin{bmatrix} 0 & \cos(\pi k \xi) \end{bmatrix}^\top \right\},
	\end{align*}
	and the projection of \Creflocal{eq:acp} to this subspace is
	\begin{align*}
		\partial_t \begin{bmatrix} x \\ v \end{bmatrix} &= \begin{bmatrix} 0 & 1 \\ b - \pi^2 k^2 & -\kappa \end{bmatrix} \begin{bmatrix} x \\ v \end{bmatrix} + \begin{bmatrix} 0 \\ \frac{\langle \cos(\pi k \xi), f \rangle_{L^2}}{\langle \cos(\pi k \xi), \cos(\pi k \xi) \rangle_{L^2}} \end{bmatrix} u, \\
		y &= \begin{bmatrix} 1 & 0 \end{bmatrix} \begin{bmatrix} x \\ v \end{bmatrix}.
	\end{align*}
	\Creflocal{eq:acp} is stabilizable and detectable iff every projection is stabilizable and detectable.
	One can verify that the projection is always detectable, and it is stabilizable if either the system stable or $\langle \cos(\pi k \xi), f \rangle_{L^2} \neq 0$.
	In summary, \Creflocal{eq:acp} can be exponentially stabilized iff $\kappa > 0$ and $\langle \cos(\pi k \xi), f \rangle_{L^2} \neq 0$ for all $k \in 1/2 + \ZZge{0}$ such that $\pi^2 k^2 \leq b$.
\end{example}

\section{Conclusion}

This paper investigated the exponential stabilizability of an infinite-dimensional system by a finite-dimensional controller.
We identified that the problem is only solvable if the system is stabilizable, detectable, and with finite unstable part.
Furthermore, exponential stabilization is solvable if the system is also quasi-finite.
How much our sufficient conditions are stronger than the necessary conditions remain unclear. 
We have identified some conditions guaranteeing quasi-finiteness, a precise characterization of quasi-finiteness is a direction of future research.

\bibliographystyle{IEEEtran} 
\bibliography{PDEControl}

\end{document}